\documentclass[9pt]{extarticle}

\usepackage[letterpaper,top=0.60in,bottom=0.62in,left=0.65in,right=0.65in]{geometry}
\usepackage{lmodern}
\usepackage{authblk}
\usepackage{graphicx}
\usepackage{amsfonts,amssymb}
\usepackage[numbers,sort&compress]{natbib}
\usepackage{microtype}
\usepackage{titlesec}
\usepackage{fix-cm}
\usepackage{mathtools}
\usepackage[T1]{fontenc}
\usepackage{courier}
\usepackage{amsthm}
\usepackage{bm}
\usepackage{xcolor}
\usepackage{tikz}
\usetikzlibrary{arrows.meta,calc,positioning,fit,decorations.pathreplacing}
\usepackage{enumitem}
\usepackage{url}
\usepackage{tabularx}
\usepackage{placeins}
\usepackage{float}
\usepackage{needspace}

\definecolor{knotBlue}{RGB}{0,101,165}
\definecolor{knotOrange}{RGB}{224,96,24}
\definecolor{knotRed}{RGB}{130,0,0}
\definecolor{knotLightBlue}{RGB}{239,246,250}

\newtheorem{theorem}{Theorem}
\newtheorem{corollary}[theorem]{Corollary}
\newtheorem{proposition}[theorem]{Proposition}

\theoremstyle{definition}

\theoremstyle{remark}

\renewcommand{\Pr}{\mathbb{P}}

\newcommand{\neqp}{\mathrm{neq}}

\newcommand{\SILocation}[1]{SI Appendix, #1}
\newcommand{\SIShadowSignLaw}{Lemma~S1.1}
\newcommand{\SIExactCollisionCalculus}{Theorem~S2.2}
\newcommand{\SIUniversalBirthdayLaw}{Theorem~S2.3}

\newcommand{\SIPoissonWindow}{Theorem~S2.5}
\newcommand{\SIHeavyAtomCounterexample}{Proposition~S2.6}

\newcommand{\SIOpenedTrefoilAdmissibility}{Lemma~S4.1}
\newcommand{\SIShadowTransfer}{Lemma~S4.2}
\newcommand{\SIShadowSelection}{Lemma~S4.3}
\newcommand{\SIGenuineInterface}{Lemma~S4.5}
\newcommand{\SIDistanceThreeBuffer}{Corollary~S4.6}
\newcommand{\SISimultaneousSeparators}{Lemma~S4.7}
\newcommand{\SITwoBraidClosure}{Lemma~S4.8}
\newcommand{\SIPersistentFactorization}{Proposition~S4.9}
\newcommand{\SIBinomialAtom}{Lemma~S4.10}
\newcommand{\SIDeterminantAntiConcentration}{Theorem~S4.11}

\newcommand{\SIFallbackIdentities}{Proposition~S5.1}

\newcommand{\SIFixedDeterminant}{Theorem~S5.3}
\newcommand{\SIFixedKnot}{Corollary~S5.4}

\newcommand{\SIDeterminantComplexity}{Proposition~S5.7}
\newcommand{\SINoFallbackDatabase}{Corollary~S5.8}
\newcommand{\SIMachineLearningBenchmark}{Proposition~S5.9}
\newcommand{\SIFiniteMoments}{Theorem~S6.1}

\newcommand{\SIFiniteBirthdayLaw}{Theorem~S6.3}
\newcommand{\SIExactFiniteCurves}{Theorem~S6.4}

\newcommand{\SISupportCompression}{Corollary~S6.6}
\newcommand{\SIFiniteBenchmark}{Proposition~S7.1}
\newcommand{\SICascadeSingletons}{Lemma~S8.1}

\newcommand{\SICascadeWorkload}{Theorem~S8.3}
\newcommand{\SICascadeOrdering}{Proposition~S8.4}
\newcommand{\SICascadeBreakEven}{Proposition~S8.5}

\newcommand{\SIReproducibilitySection}{Section~S9}
\newcommand{\SIDeterminantRefinementTable}{Table~S4}

\usepackage[colorlinks=true,allcolors=knotBlue]{hyperref}

\newcommand{\Endparasplit}{}
\setlist{nosep}
\titlespacing*{\section}{0pt}{1.8ex plus 0.4ex minus 0.2ex}{0.7ex}
\titlespacing*{\subsection}{0pt}{1.4ex plus 0.3ex minus 0.2ex}{0.5ex}
\titlespacing*{\paragraph}{0pt}{1.0ex plus 0.2ex minus 0.1ex}{0.6em}

\title{\bfseries Knots, black holes, databases, and birthdays:\\Collision entropy of knot invariants}
\author[a,d]{Pedro Olivares-S\'anchez}
\author[b,d]{Edison Jessie V\'azquez Gordillo}
\author[c]{Radmila Sazdanovi\'c}
\author[d]{Carlos Alfonso Ruiz Guido}
\author[e]{Aldo Guzm\'an-S\'aenz}
\author[f]{Renato Osvaldo Salmer\'on-Garc\'ia}
\author[b]{Ramiro L\'opez-V\'azquez}
\author[b]{Ernesto Lupercio}

\affil[a]{Department of Mathematics, Instituto Tecnol\'ogico Aut\'onomo de M\'exico (ITAM), Mexico City 01080, Mexico}
\affil[b]{Department of Mathematics, Centro de Investigaci\'on y de Estudios Avanzados del Instituto Polit\'ecnico Nacional (CINVESTAV-IPN), Mexico City 07360, Mexico}
\affil[c]{Department of Mathematics, North Carolina State University, Raleigh, NC 27695, USA}
\affil[d]{Colegio de Matem\'aticas Bourbaki, M\'erida, Yucat\'an, Mexico}
\affil[e]{Independent Researcher}
\affil[f]{School of Engineering and Sciences, Tecnol\'ogico de Monterrey, Campus Santa Fe, Mexico City 01389, Mexico}
\date{Preprint prepared August 23, 2026}

\hypersetup{
  pdftitle={Knots, black holes, databases, and birthdays: Collision entropy of knot invariants},
  pdfauthor={Pedro Olivares-Sánchez et al.},
  pdfsubject={Preprint},
  pdfkeywords={knot invariants, collision entropy, birthday paradox, random knot diagrams, computational black holes}
}

\begin{document}
\raggedbottom
\maketitle

\begin{abstract}
A knot invariant is a fingerprint shared by equivalent knots: unequal values certify inequivalence, while equal values may conceal different knots. Under two rooted random-diagram models, we prove that the incomplete normalized Alexander polynomial separates random pairs yet collides in a growing database. For an $n$-crossing diagram $D_n$, let $K(D_n)$ be its knot. If $D_n$ and $D_n'$ are independent, then $\Pr\{\Delta_{K(D_n)}=\Delta_{K(D_n')}\}=O(n^{-1/2})$, and every fixed normalized Alexander polynomial is exponentially rare. Since $\det K=|\Delta_K(-1)|$, equality of two Alexander polynomials forces determinant equality, while equality with a fixed polynomial fixes the determinant. With exponentially high probability, $D_n$ has linearly many disjoint opened-trefoil slots. Conditional on the shadow and exterior signs, the slot indicators are independent Bernoulli$(1/4)$ variables, and their sum is a binomial coordinate in the $3$-adic determinant valuation. Hence $\sup_{a\geq1}\Pr\{\det K(D_n)=a\}=O(n^{-1/2})$.

For a discrete invariant $I$, let $\alpha_n(I)=\Pr\{I(D_n)=I(D_n')\}$ be the random-pair equality probability. Its collision entropy is $H_2(I(D_n))=-\log\alpha_n(I)$. An independent $M$-sample has $\binom{M}{2}\alpha_n(I)$ colliding pairs on average and birthday scale $\alpha_n(I)^{-1/2}$. If $M_n^2\alpha_n(I)\to\infty$, a repeated value occurs with probability tending to one even when $\alpha_n(I)\to0$. For the determinant, $M=o(n^{1/4})$ suffices for collision freedom with high probability; a matching lower bound is open. Exact finite-population formulas treat fixed censuses with unequal fiber sizes. A cascade sends unresolved records to a later invariant; we calculate its expected cost. In an invariant-first comparison, $\alpha_n(I)$ is the probability of calling a complete procedure. On the balanced pair-classification benchmark, the normalized-Alexander rule has balanced accuracy $1-O(n^{-1/2})$ but cannot distinguish inequivalent pairs in one fiber.
\end{abstract}

\noindent\textbf{Keywords:} knot invariants; collision entropy; birthday paradox; random knot diagrams; computational black holes
\medskip

A knot is a closed curve embedded in three-dimensional space. A knot diagram records a projection of that curve to the plane, together with the over--under information at each crossing. Two diagrams represent the same knot when the corresponding curves can be deformed into one another in three dimensions without cutting a strand or passing one strand through another. The basic comparison problem is to decide whether two given diagrams represent the same knot.

A knot invariant assigns a value to the represented knot. Equivalent knots must have the same value, so different values certify inequivalence. Equal values need not certify equivalence: an incomplete invariant assigns the same value to some distinct knots. The normalized Alexander polynomial is a classical invariant of this kind. We ask whether an incomplete invariant can separate almost every random pair and still repeat somewhere in a sufficiently large database.

We work with two probability laws on diagrams having exactly $n$ crossings. In each model a diagram is rooted by marking one directed half-edge of the underlying planar map. Under either law, the Alexander polynomial separates two independent diagrams with probability $1-O(n^{-1/2})$, and the probability of every fixed normalized Alexander polynomial decays exponentially. These assertions concern the specified sampling laws and do not imply pointwise injectivity. A database of $M$ independently sampled diagrams contains $\binom{M}{2}$ pairs. Since this number grows quadratically, a repeated value can occur with probability tending to one even while a single random pair is separated with probability tending to one. Pairwise separation and database uniqueness therefore have different thresholds.
\Endparasplit

The same distinction applies to any discrete fingerprint. Fix $n$, let $D_n$ be a random object of complexity $n$, and let $I(D_n)$ take values in a finite or countable set.  Here $I$ is an invariant: equivalent objects receive the same value. A fingerprint is either one such map $I$ or a finite tuple $J=(I_1,\ldots,I_k)$ of them. Its level sets are the fingerprint fibers. We suppress the fixed ambient $n$ from atom and sample subscripts, writing
\begin{equation}
 p_v=\Pr\{I(D_n)=v\},
 \qquad
 \alpha_n(I)=\sum_v p_v^2.
 \label{eq:opening-alpha}
\end{equation}
For an independent draw $D_n'$ from the same law, this quadratic mass is exactly the probability of an invariant collision,
\[
 \alpha_n(I)=\Pr\{I(D_n)=I(D_n')\}.
\]
Its negative (natural) logarithm,
\[
 H_2(I(D_n))=-\log\alpha_n(I),
\]
is the order-two R\'enyi entropy, also called the collision entropy \cite{renyi1961}.  When unequal invariant values prove non-equivalence, $1-\alpha_n(I)$ is the probability that the invariant resolves one random comparison immediately.

The equality event also has a computational meaning. An invariant-first algorithm computes $I$, returns \textsc{No} when the two values differ, and invokes a complete equivalence procedure when they agree. The second step is necessary because the invariant alone does not decide pairs in a common fiber. We call this equality region the \emph{invariant black hole}. Generic-case complexity studies algorithms that are efficient on an asymptotically dominant set while exceptional inputs retain the unresolved behavior \cite{kapovichetal2003,gilmanetal2007,hamkinsmiasnikov2006,myasnikovrybalov2008}; later work calls such an exceptional set the ``black hole of the algorithm'' \cite{rybalov2013}. For independent inputs, its probability is $\alpha_n(I)$. The term is algorithm-relative: membership says nothing about intrinsic difficulty.

Now sample an independent database $D_n^{(1)},\ldots,D_n^{(M)}$ of copies of $D_n$ and let
\[
 C_M(I)=\sum_{1\le i<j\le M}
 \mathbf 1_{\{I(D_n^{(i)})=I(D_n^{(j)})\}}
\]
count colliding pairs.  Then
\begin{equation}
 \mathbb E C_M(I)=\binom M2\alpha_n(I).
 \label{eq:opening-workload}
\end{equation}
The random variable $C_M(I)$ is also the number of complete-procedure calls made by the invariant-first algorithm in an all-pairs comparison. Thus $\alpha_n(I)$ is the random-pair equality probability, the probability of entering the invariant black hole, and the factor in the expected database workload $\binom M2\alpha_n(I)$.

Since an $M$-record database has $\binom M2$ pairs, the preceding expectation suggests a threshold when $M^2\alpha_n(I)$ is of constant order. The universal law proved below makes this precise. Its distribution-free birthday scale is
\begin{equation}
 M_{\mathrm B}(n)=\alpha_n(I)^{-1/2}
 =\exp\!\left(\frac{H_2(I(D_n))}{2}\right).
 \label{eq:opening-birthday-scale}
\end{equation}
Databases of size $o(M_{\mathrm B}(n))$ are collision-free with high probability, whereas databases of size $\omega(M_{\mathrm B}(n))$ contain a collision with high probability. Thus, if $\alpha_n(I)\to0$ and $M_n^2\alpha_n(I)\to\infty$, a random pair is separated with probability tending to one while a database of size $M_n$ contains a repeated fingerprint with probability tending to one. This is the birthday phenomenon for an arbitrary discrete law, expressed in terms of collision entropy \cite{diaconismosteller1989,kimmontenegroperestetali2010}.

We now return to knots. For a knot $K$, its determinant is $\det K=|\Delta_K(-1)|$. Equality of normalized Alexander polynomials therefore implies equality of determinants. If the Alexander polynomial equals a fixed polynomial $P$, then the determinant equals the fixed integer $|P(-1)|$. Thus a uniform bound on determinant probabilities controls both equality between two random Alexander polynomials and equality with any fixed Alexander polynomial.

We prove this determinant bound in two rooted random-diagram models \cite{cantarellachapmanmastin2016,chapman2017}. The connected sum of two knots is formed by removing a short arc from each and joining the resulting pairs of endpoints. Its determinant is the product of their determinants; a trefoil has determinant $3$, and the unknot has determinant $1$. The shadow of a diagram is its knot projection with the crossing information removed. An opened-trefoil slot is a small region whose internal crossing signs can produce either a trefoil summand or an unknot summand. With exponentially high probability, the shadow contains linearly many disjoint such slots. After the shadow and all exterior crossing signs have been exposed, the signs inside the slots remain independent. Cutting along a sphere that meets the knot in two points separates a local knot factor. The selected slots admit a single family of pairwise disjoint such spheres for every internal redecoration, and replacing the selected tangles by fixed trivial arcs gives the same exterior knot for every internal sign vector. Each slot contributes a trefoil with conditional probability $1/4$ and an unknot with conditional probability $3/4$. The number of trefoil outcomes is a binomial coordinate in the $3$-adic valuation, the exponent of $3$ in the determinant. Consequently,
\begin{equation}
 \sup_{a\ge1}\Pr\{\det K(D_n)=a\}=O(n^{-1/2}).
 \label{eq:opening-main-theorem}
\end{equation}
It follows from \eqref{eq:opening-main-theorem} that two independent random diagrams have equal determinants with probability $O(n^{-1/2})$, and a determinant-first comparison invokes its complete procedure with vanishing probability. For a fixed target, determinant equality forces the binomial coordinate into a fixed lower tail, so this probability decays exponentially. For independent databases, the same estimate gives
\[
 M_n=o(n^{1/4})
 \quad\Longrightarrow\quad
 \Pr\{\text{any determinant-first fallback call}\}\to0.
\]
This is a sufficient condition obtained from the upper bound $\alpha_n(\det)=O(n^{-1/2})$; no converse at the exponent $1/4$ is proved. The birthday scale is $M_{\mathrm B}(n)=\alpha_n(\det)^{-1/2}$. The bound gives $M_{\mathrm B}(n)=\Omega(n^{1/4})$, but its order remains unknown.

The preceding database model uses independent samples from a probability law. A finite knot census is different: it is a fixed population of distinct entries sampled without replacement. If the census has $T$ entries and invariant-fiber sizes $m_1,\ldots,m_R$, the no-collision probability for a uniform $M$-subset is $e_M(m_1,\ldots,m_R)/\binom TM$, where $e_M$ is the $M$th elementary symmetric polynomial. Its numerator counts collision-free subsets by choosing one entry from each of $M$ distinct fibers. The number of occupied values alone is insufficient: a few large fibers can cause early collisions even when many values occur. An invariant cascade applies a sequence of invariants, sending only records that agree on all earlier values to the next test. Fiber sizes determine the expected numbers of unresolved objects and pairs after each stage, yielding a two-stage ordering criterion and a marginal break-even criterion.

Recent work studies related finite-population statistics. D\l{}otko, Gurnari, and Sazdanovi\'c introduce Mapper-type methods in \cite{dlotko2024mapper}; their subsequent knot-theoretic study attributes to Ernesto Lupercio the observation that unique-value ratios may decrease even while the probability of separating two random knots tends to one \cite{dlotkogurnarisazdanovic2025}. Tubbenhauer and Zhang compare quantum invariants on cumulative prime-knot censuses using value counts and repeated-pair experiments \cite{tubbenhauerzhang2025}. Kelom\"aki et al. prove exponential decay of value and singleton-detection ratios, together with exponential growth of typical fibers, in cumulative prime-alternating-link populations \cite{kelomakietal2025}. Our knot-specific theorem instead concerns two-sample collision probabilities in rooted fixed-crossing ensembles and proves a moving-target determinant bound.

The argument uses established results on coincidence probabilities, R\'enyi entropy, generic-case complexity, knot invariants, and random-diagram patterns \cite{renyi1961,kapovichetal2003,gilmanetal2007,hamkinsmiasnikov2006,myasnikovrybalov2008,rybalov2013,chapman2017}. Its new input is the conditional binomial coordinate in the determinant valuation. This gives the anti-concentration estimate and its consequences for random pairs, databases, and invariant-first computation.

\begin{figure}[t]
\centering
\includegraphics[width=\linewidth]{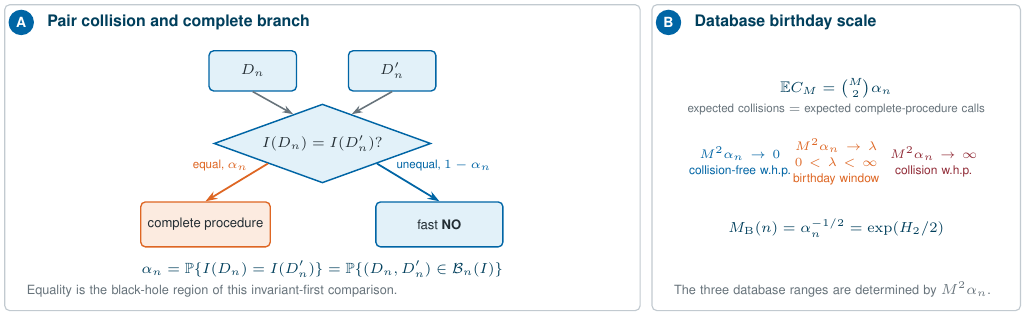}
\caption{Collision probability for a pair and a database. (A) The invariant $I$ assigns a fingerprint value to each object. For two independent objects, $\alpha_n=\mathbb P\{I(D_n)=I(D_n')\}$ is the probability that their values agree. Unequal values certify inequivalence and give a fast \textsc{No}. Equal values leave the comparison unresolved by $I$, so an invariant-first algorithm calls its complete procedure. The term black hole refers only to this algorithm-relative equality region; it does not imply intrinsic hardness. (B) For $M$ independent records, the expected number of colliding pairs, and hence of complete-procedure calls in an all-pairs comparison, is $\mathbb E C_M=\binom{M}{2}\alpha_n$. The database is collision-free with high probability when $M^2\alpha_n\to0$ and contains a collision with high probability when $M^2\alpha_n\to\infty$. The critical window is $M^2\alpha_n\to\lambda\in(0,\infty)$. The birthday scale is $M_{\mathrm B}(n)=\alpha_n^{-1/2}=\exp(H_2/2)$, where $H_2=-\log\alpha_n$ is the collision entropy.}
\label{fig:black-hole-architecture}
\end{figure}

\section{Collision entropy and the inverse birthday law}
\label{sec:birthday-law}
The pair-versus-database transition depends only on the discrete distribution of the fingerprint, not on knot theory. We therefore prove the birthday law for an arbitrary discrete fingerprint distribution.

For each $n$, let $X_n=I(D_n)$ have a discrete law, and fix this ambient $n$ throughout the following definitions.  Write $X_n'{}$ for an independent copy and $X_n^{(1)},\ldots,X_n^{(M)}$ for independent copies of $X_n$, and set
\[
 p_v=\Pr\{X_n=v\},
 \qquad
 \alpha_n=\sum_vp_v^2,
 \qquad
 \beta_n=\sum_vp_v^3,
 \qquad
 p_n^{\max}=\sup_v p_v,
 \qquad
 C_M=\sum_{i<j}\mathbf 1_{\{X_n^{(i)}=X_n^{(j)}\}}.
\]
Thus $\alpha_n$ is the probability that two independent samples agree, and $\beta_n$ is the probability that three independent samples agree. The largest atom satisfies
\[
 (p_n^{\max})^2\le \alpha_n\le p_n^{\max},
\]
so $\alpha_n\to0$ if and only if $p_n^{\max}\to0$. Counting covariances gives
\begin{align*}
 \mathbb E C_M
 &=\binom M2\alpha_n,\\
 \operatorname{Var}(C_M)
 &=\binom M2(\alpha_n-\alpha_n^2)
   +6\binom M3(\beta_n-\alpha_n^2).
\end{align*}
To prove that a collision occurs with high probability, one needs concentration of the collision count. The only pair comparisons that can be dependent are those sharing one sampled object, and their joint probability is $\beta_n$; comparisons on disjoint pairs are independent. Thus $M^2\alpha_n$, the expected collision count up to a constant factor, is the candidate transition parameter. The moment calculation and the scale proof are in \SILocation{\SIExactCollisionCalculus{} and \SIUniversalBirthdayLaw{}}. The next theorem shows that this parameter determines the answer away from the transition scale.

\begin{theorem}[Universal inverse birthday law]\label{thm:inverse-birthday}
For each $n$, let the fingerprint law be any discrete distribution, possibly depending on $n$.  For any integer sequence $M_n\ge2$, let $C_{M_n}$ denote the collision count among $M_n$ iid samples from the law at level $n$.  Then
\[
 M_n^2\alpha_n\longrightarrow0
 \quad\Longrightarrow\quad
 \Pr\{C_{M_n}>0\}\longrightarrow0,
\]
whereas
\[
 M_n^2\alpha_n\longrightarrow\infty
 \quad\Longrightarrow\quad
 \Pr\{C_{M_n}>0\}\longrightarrow1.
\]
Equivalently, with
\[
 M_{\mathrm B}(n):=\alpha_n^{-1/2}
 =\exp\!\left(\frac{H_2(X_n)}{2}\right),
\]
databases of size $o(M_{\mathrm B}(n))$ are collision-free with high probability, while databases of size $\omega(M_{\mathrm B}(n))$ contain a collision with high probability.
\end{theorem}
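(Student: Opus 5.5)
The plan is to handle the two implications separately: a first-moment bound for the collision-free direction, and a Poissonization argument for the collision direction. A second-moment (Paley--Zygmund) argument would be the obvious choice for the second direction, but I would avoid it. The term $6\binom M3(\beta_n-\alpha_n^2)$ in the variance formula can dominate $(\mathbb E C_M)^2$ when the law has a heavy atom. So Chebyshev alone does not give $\Pr\{C_{M_n}>0\}\to1$ for an arbitrary discrete law.

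\emph{First implication.} By Markov's inequality and the identity $\mathbb E C_M=\binom M2\alpha_n$,
\[
\Pr\{C_{M_n}>0\}\le \mathbb E C_{M_n}\le \tfrac12 M_n^2\alpha_n\to0 .
\]

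\emph{Second implication.} Since $\alpha_n\le1$, the hypothesis $M_n^2\alpha_n\to\infty$ forces $M_n\to\infty$.

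Put $m=m_n=M_n/2$ and let $N\sim\mathrm{Poisson}(m)$ be independent of the samples. The event ``no collision'' is monotone decreasing in the sample size. Hence
\[
\Pr\{C_{M_n}=0\}\le \Pr\{\text{no collision among } X_n^{(1)},\ldots,X_n^{(N)}\}+\Pr\{N>M_n\}.
\]
The last term tends to $0$ by Chebyshev, since $\operatorname{Var}N=m=o(m^2)$.

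Under Poissonization the occupation counts $N_v$ are independent $\mathrm{Poisson}(mp_v)$ variables. Therefore
\[
\Pr\{\text{no collision}\}=\prod_v\Pr\{N_v\le1\}=\prod_v(1+x_v)e^{-x_v},\qquad x_v=mp_v .
\]

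The key elementary inequality is
\[
(1+x)e^{-x}\le\exp\bigl(-c\min(x,x^2)\bigr)\qquad\text{for all }x\ge0,
\]
with an absolute constant $c>0$. To get it, compare $x-\log(1+x)$ with $x^2/4$ on $[0,1]$ and with $x/4$ on $[1,\infty)$. It then suffices to show
\[
S_n:=\sum_v\min(x_v,x_v^2)\to\infty .
\]

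This is the step I expect to be the crux: controlling the heavy atoms without any assumption on the shape of the law. I would split the atoms into light ones ($x_v<1$), which contribute $\sum_{\mathrm{light}}x_v^2$, and heavy ones ($x_v\ge1$), which contribute $\sum_{\mathrm{heavy}}x_v$. For nonnegative reals, $\sum x_v\ge(\sum x_v^2)^{1/2}$. Hence
\[
S_n\ge A+\sqrt B,\qquad A=\sum_{\mathrm{light}}x_v^2,\quad B=\sum_{\mathrm{heavy}}x_v^2,\quad A+B=m^2\alpha_n=\tfrac14M_n^2\alpha_n\to\infty .
\]
Since $\max(A,B)\to\infty$, we get $S_n\ge\min\bigl(\max(A,B),\sqrt{\max(A,B)}\bigr)\to\infty$. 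Therefore $\Pr\{C_{M_n}=0\}\to0$.

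The reformulation in terms of $M_{\mathrm B}(n)=\alpha_n^{-1/2}=\exp(H_2/2)$ is immediate. The condition $M_n=o(M_{\mathrm B}(n))$ is exactly $M_n^2\alpha_n\to0$, and $M_n=\omega(M_{\mathrm B}(n))$ is exactly $M_n^2\alpha_n\to\infty$.
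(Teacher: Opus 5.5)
Your proof is correct, but it takes a different route from the paper for the second implication. Your stated reason for abandoning the second-moment method is also wrong: the paper proves that implication by Chebyshev. The ingredient you missed is the power-sum inequality $\beta_n=\sum_vp_v^3\le(\sum_vp_v^2)^{3/2}=\alpha_n^{3/2}$, which is just $\|p\|_3\le\|p\|_2$. Since $6\binom M3=M(M-1)(M-2)$ and $\binom M2^2=M^2(M-1)^2/4$, the cluster term obeys
\[
 \frac{6\binom M3\beta_n}{\binom M2^2\alpha_n^2}\le\frac{4\beta_n}{M\alpha_n^2}\le\frac{4}{M\sqrt{\alpha_n}}.
\]
Hence $\operatorname{Var}(C_M)/(\mathbb E C_M)^2\le 4/(M^2\alpha_n)+4/(M\sqrt{\alpha_n})\to0$ whenever $M^2\alpha_n\to\infty$. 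A heavy atom can make $\beta_n/\alpha_n^2$ as large as $\alpha_n^{-1/2}$, but that factor is divided by $M$, and $M\sqrt{\alpha_n}\to\infty$ in this regime. Heavy atoms matter only in the critical window $M^2\alpha_n\to\lambda$, where $4/(M\sqrt{\alpha_n})$ stays bounded away from zero. That is why the paper imposes the extra condition $M_n^3\beta_n\to0$ for the Poisson limit, but not for the zero--one law.

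Your Poissonization proof itself checks out. The monotone coupling with $N\sim\mathrm{Poisson}(M_n/2)$ is valid, and so is the bound $\Pr\{N>M_n\}\le 2/M_n$. The product formula for the independent $\mathrm{Poisson}(mp_v)$ counts, the inequality $x-\log(1+x)\ge\frac14\min(x,x^2)$, and the light/heavy split giving $S_n\ge\min\bigl(\max(A,B),\sqrt{\max(A,B)}\bigr)$ are also correct.

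Compared with the paper's argument, yours needs no variance identity and gives a much stronger rate. If you replace Chebyshev by a Chernoff bound for $N$, you get $\Pr\{C_M=0\}\le 2\exp\bigl(-c\min(M^2\alpha_n,M\sqrt{\alpha_n})\bigr)$ for an absolute $c>0$. The paper's bound decays only like $(M\sqrt{\alpha_n})^{-1}$. The paper's route is shorter, stays in the fixed-$M$ model without any coupling, and reuses exact moment formulas that it needs anyway for the critical-window analysis.
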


\begin{proof}
The first implication follows from Markov's inequality and
$\mathbb E C_{M_n}=\binom{M_n}{2}\alpha_n$.
For the second, the power-sum inequalities
\[
 \alpha_n^2\le\beta_n\le\alpha_n^{3/2}
\]
imply
\[
 \frac{\operatorname{Var}(C_M)}
 {(\mathbb E C_M)^2}
 \le
 \frac{4}{M^2\alpha_n}
 +\frac{4}{M\sqrt{\alpha_n}}.
\]
If $M_n^2\alpha_n\to\infty$, both terms vanish. Chebyshev's inequality then gives
$\Pr\{C_{M_n}=0\}\to0$.
\end{proof}

The threshold occurs when $M_n$ is comparable to $\alpha_n^{-1/2}$. At that scale the expected collision count remains bounded, so the preceding zero-one law does not determine the limiting probability. If $M_n^2\alpha_n\to\lambda\in(0,\infty)$ and, in addition,
\[
 M_n^3\beta_n\longrightarrow0,
\]
then $M_n\to\infty$, $\mathbb E C_{M_n}\to\lambda/2$, and
\[
 C_{M_n}\Longrightarrow\operatorname{Poisson}(\lambda/2).
\]
The proof is in \SILocation{\SIPoissonWindow{}}. The construction in \SILocation{\SIHeavyAtomCounterexample{}} has a moving heavy atom and fails to have this Poisson limit, so the additional condition cannot be omitted in general. Such failures are familiar in Poisson approximation with local dependence or rare clusters \cite{arratiagoldsteingordon1989,barbourholstjanson1992}.

\begin{corollary}[Inverse birthday phenomenon]\label{cor:inverse-birthday}
Suppose $\alpha_n\to0$ and $M_n^2\alpha_n\to\infty$. Then
\[
 \Pr\{X_n\ne X_n'\}\longrightarrow1,
 \qquad
 \Pr\{C_{M_n}>0\}\longrightarrow1.
\]
Thus one random pair is asymptotically separated while a database of size $M_n$ asymptotically contains a repeated fingerprint.
\end{corollary}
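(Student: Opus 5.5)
The corollary follows directly from Theorem~\ref{thm:inverse-birthday}, with one extra identity for the pair statement. There are two separate claims, and I will treat them in turn.

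For the pairwise separation claim, I will use the identity $\Pr\{X_n=X_n'\}=\sum_v p_v^2=\alpha_n$. This holds because $X_n$ and $X_n'$ are independent with the same discrete law, so the event $\{X_n=X_n'\}$ is the disjoint union over values $v$ of $\{X_n=v,\,X_n'=v\}$. Each of these events has probability $p_v^2$. Hence
\[
\Pr\{X_n\ne X_n'\}=1-\alpha_n\longrightarrow1
\]
under the hypothesis $\alpha_n\to0$. No further use of the structure of the law is needed.

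For the database claim, I will invoke the second implication of Theorem~\ref{thm:inverse-birthday} with the given sequence $M_n$. The hypothesis $M_n^2\alpha_n\to\infty$ is exactly its assumption, so the theorem gives $\Pr\{C_{M_n}>0\}\to1$. The only bookkeeping point is that the theorem requires integers $M_n\ge2$. Since $\alpha_n\le1$, the condition $M_n^2\alpha_n\to\infty$ forces $M_n\to\infty$, so $M_n\ge2$ holds for all large $n$. Finitely many initial terms do not affect a limit statement.

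I do not expect a genuine obstacle here. The substantive content is the Chebyshev variance bound, which is already contained in the theorem's proof. The remaining point is interpretive: the two hypotheses are compatible and do not conflict. This is because $\alpha_n\to0$ only forces $M_n$ to grow faster than $\alpha_n^{-1/2}$, which is the birthday scale $M_{\mathrm B}(n)$. I will close by noting that this combination yields the stated interpretation: one random pair is separated with probability tending to one, while a database of size $M_n$ contains a repeated fingerprint with probability tending to one.
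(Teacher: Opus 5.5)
Your proposal is correct and matches the paper's argument: the pair claim is the identity $\Pr\{X_n=X_n'\}=\alpha_n$ combined with $\alpha_n\to0$, and the database claim is the second implication of Theorem~\ref{thm:inverse-birthday}, where your observation that $M_n\to\infty$ handles the requirement $M_n\ge2$. One harmless slip in your closing remark: it is the hypothesis $M_n^2\alpha_n\to\infty$, not $\alpha_n\to0$, that forces $M_n$ to outgrow $\alpha_n^{-1/2}$, while $\alpha_n\to0$ is what makes that scale diverge.
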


For knot invariants, it remains to bound $p_n^{\max}$ uniformly. The number of occupied invariant values alone gives no such bound, since a few fibers may account for most of $\alpha_n$. A bound for each fixed value would also be insufficient: the most likely value may change with $n$. The following section controls the supremum over all determinant values, including values that move with $n$, using trefoil-slot contributions that are independent after conditioning on the shadow and the exterior crossing signs.

\section{Trefoil slots and determinant anti-concentration}
\label{sec:trefoil-slots}
A knot invariant has the same value on every diagram of a given knot, although different knots may share that value.  For a random diagram, anti-concentration means that no single value of the invariant has substantial probability.  We prove such an estimate for the knot determinant,
\[
 \det K:=\lvert\Delta_K(-1)\rvert,
\]
the absolute value of the Alexander polynomial evaluated at $-1$, in two random-diagram ensembles introduced by Chapman \cite{chapman2017}.  For a positive integer $m$, the $3$-adic valuation $v_3(m)$ is the exponent of $3$ in its prime factorization.  With exponentially high probability, the unsigned planar diagram contains linearly many separated places where three unexposed crossing choices can insert either a trefoil or an unknot.  These choices produce a binomial summand in $v_3(\det K)$ and hence prevent the determinant from concentrating at one integer.

\subsection{Random signs on a fixed shadow}
Let $\mathcal D_n^{\mathrm{gen}}$ be Chapman's rooted general one-component diagrams with $n$ crossings and let $\mathcal D_n^{\mathrm{red}}$ be the corresponding rooted reduced class.  A root is a distinguished dart of the planar map, and this mark is retained in the counting; one-component means that the diagram represents a knot rather than a link.  Write $\mathcal S_n^{\star}$ for the shadow class obtained from $\mathcal D_n^{\star}$ by forgetting which strand passes over at each crossing, where $\star\in\{\mathrm{gen},\mathrm{red}\}$.  Thus a shadow is a rooted four-valent planar map with the strand continuation retained.  The reduced class excludes shadows with a disconnecting crossing vertex.  One-componentness and reducedness are properties of the shadow.  Restoring one of the two over--under choices at each vertex gives a decorated lift.  Every rooted shadow in either class therefore has exactly $2^n$ decorated lifts, and a uniform diagram $D_n$ can be sampled exactly by
\[
 \text{uniform rooted shadow}
 \quad+\quad
 \text{independent fair signs at its }n\text{ crossings}.
\]
The slots are selected from the shadow before any internal signs are exposed. The product law is proved in \SILocation{\SIShadowSignLaw{}}.

Start with the standard three-crossing trefoil diagram and cut one edge away from its crossings.  The result is a one-strand tangle with two boundary ends; $Q$ denotes its shadow.  Rejoining the ends gives the doubled-triangle graph.  The tangle is prime, two-leg-prime, and reduced and admits an asymmetric fixed decoration $P$, as required by Chapman's theorem.

Chapman's connect-sum construction and pattern theorem imply that $P$ occurs linearly many times as a connected-sum tangle, outside an exponentially small exceptional set \cite[Propositions 3 and 10]{chapman2017}.  Forgetting the signs of $P$ leaves the same abundance of unsigned $Q$-slots.  A deterministic rule based only on the rooted planar map then selects a separated subfamily: after occurrences within bounded graph distance are removed, it retains
\[
 s_n=\lfloor\delta n\rfloor
\]
$Q$-slots whose three-vertex carriers are at pairwise graph distance at least three, on an event $\mathcal G_n$ satisfying
\[
 \Pr(\mathcal G_n^c)\le C_0e^{-c_0n}.
\]
The fixed decoration $P$ is thus only a witness for Chapman's theorem. Its signs are discarded before the random diagram is decorated. Admissibility, passage to shadows, and shadow-measurable selection are proved in \SILocation{\SIOpenedTrefoilAdmissibility{}, \SIShadowTransfer{}, and \SIShadowSelection{}}.

\begin{figure}[t]
\centering
\includegraphics[width=\linewidth]{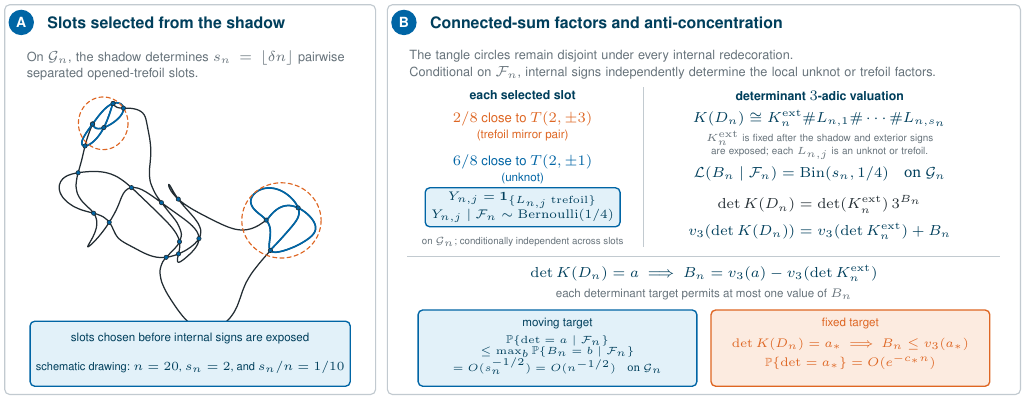}
\caption{Opened-trefoil slots and the binomial coordinate. A shadow is the four-valent planar map obtained from a knot diagram by forgetting the over--under choice at each crossing. (A) On the event $\mathcal G_n$, a rule depending only on the shadow selects $s_n=\lfloor\delta n\rfloor$ pairwise separated three-crossing slots before their signs are exposed. The schematic has $n=20$ and $s_n=2$, so its displayed ratio $1/10$ does not specify the theorem's constant $\delta>0$. (B) The displayed tangle circles correspond to pairwise disjoint separating spheres that work for every later choice of the internal signs. After the shadow and all exterior signs have been revealed, $\mathcal F_n$ records this information. Two of the eight equally likely sign triples close to the trefoil mirror pair and six close to the unknot. The slot indicators are therefore conditionally independent Bernoulli$(1/4)$ variables, and their sum $B_n$, the number of trefoil factors, satisfies $\mathcal L(B_n\mid\mathcal F_n)=\operatorname{Bin}(s_n,1/4)$. Replacing every local tangle by a trivial arc gives the exterior knot $K_n^{\mathrm{ext}}$. Cutting along the separating spheres gives $K(D_n)\cong K_n^{\mathrm{ext}}\#L_{n,1}\#\cdots\#L_{n,s_n}$, where $\#$ denotes connected sum. The determinant $\det K=\lvert\Delta_K(-1)\rvert$ is multiplicative under connected sum; it equals $3$ for a trefoil and $1$ for the unknot. Hence $v_3(\det K(D_n))=v_3(\det K_n^{\mathrm{ext}})+B_n$, where $v_3(m)$ is the exponent of $3$ in $m$. Equality with a determinant target $a$ forces $B_n=v_3(a)-v_3(\det K_n^{\mathrm{ext}})$, whose largest conditional atom is $O(n^{-1/2})$. For fixed $a_*$, determinant equality implies $B_n\le v_3(a_*)$, and this lower-tail probability is exponentially small.}
\label{fig:hidden-binomial}
\end{figure}

\subsection{The Bernoulli contribution of one slot}
Work on $\mathcal G_n$, where the selected family has exactly $s_n$ slots.  Reveal the complete shadow and every crossing sign outside their three-vertex carriers, but leave the three signs in each slot unexposed.  Let $\mathcal F_n$ be the sigma-field generated by the revealed data.  The event $\mathcal G_n$, the selected list, and the separator geometry are $\mathcal F_n$-measurable.  Conditional on $\mathcal F_n$, the $3s_n$ internal signs remain independent and fair.

For the $j$th slot, write the reference-relative signs as $\boldsymbol\varepsilon_{n,j}=(\varepsilon_{n,j,1},\varepsilon_{n,j,2},\varepsilon_{n,j,3})$.  There are eight equally likely triples.  The two unanimous triples close to the two mirror images of the trefoil; each of the other six closes to the unknot.  Let $T(2,r)$ denote the closure of the two-strand braid of exponent $r$.  After a two-braid generator is chosen consistently with the embedding, the local closure is
\[
 T\!\left(2,\eta_j(\varepsilon_{n,j,1}+\varepsilon_{n,j,2}+\varepsilon_{n,j,3})\right),
 \qquad \eta_j\in\{-1,+1\},
\]
where $\eta_j$ is fixed by the shadow.  Hence
\[
 Y_{n,j}:=\mathbf 1\{\varepsilon_{n,j,1}=\varepsilon_{n,j,2}=\varepsilon_{n,j,3}\},
 \qquad
 \mathcal L\bigl((Y_{n,1},\ldots,Y_{n,s_n})\mid\mathcal F_n\bigr)
 =\operatorname{Bernoulli}(1/4)^{\otimes s_n}
\]
almost surely on $\mathcal G_n$.

This local calculation does not by itself give a simultaneous connected-sum decomposition.  For that, the cutting spheres must be pairwise disjoint and must remain fixed when any collection of internal signs is changed.  Each selected occurrence contains its three crossing vertices and all five edges between them; exactly two distinct joining edges connect it to the rest of the shadow. Distinct internal carriers lie at graph distance at least three. These facts produce a single shadow-measurable family of pairwise disjoint balls that separates all selected occurrences for every choice of their internal crossing signs.

On an $\mathcal F_n$-fiber, let $D_n[\mathbf e]$ denote the diagram obtained by assigning an arbitrary internal sign vector $\mathbf e=(\mathbf e_1,\ldots,\mathbf e_{s_n})$.  Replace the strand inside each separating ball by a fixed trivial arc.  The resulting knot is the exterior knot $K_n^{\mathrm{ext}}$; it is $\mathcal F_n$-measurable and unchanged by $\mathbf e$.  Cutting along the separating spheres and capping both sides expresses the original knot as a connected sum, denoted by $\#$.  Simultaneously for all $2^{3s_n}$ choices,
\begin{equation}
 K(D_n[\mathbf e])
 \cong K_n^{\mathrm{ext}}\#L_{n,1}(\mathbf e_1)\#\cdots\#L_{n,s_n}(\mathbf e_{s_n}),
 \label{eq:simultaneous-factorization}
\end{equation}
where each local factor is an unknot or a trefoil according to the preceding rule.  The two-edge boundary, distance-three separation, simultaneous separator balls, two-braid closure, and factorization in every $\mathcal F_n$-fiber are proved in \SILocation{\SIGenuineInterface{}, \SIDistanceThreeBuffer{}, \SISimultaneousSeparators{}, \SITwoBraidClosure{}, and \SIPersistentFactorization{}}.

The determinant is multiplicative under connected sum.  The unknot has determinant $1$, while either trefoil has determinant $3$.  For the random internal vector, put $B_n=\sum_{j=1}^{s_n}Y_{n,j}$, the number of trefoil factors.  It follows, almost surely on $\mathcal G_n$, that
\begin{equation}
 \det K(D_n)=\det(K_n^{\mathrm{ext}})3^{B_n},
 \qquad
 v_3\!\left(\det K(D_n)\right)=v_3(\det K_n^{\mathrm{ext}})+B_n,
 \label{eq:hidden-binomial}
\end{equation}
with
\[
 \mathcal L(B_n\mid\mathcal F_n)=\operatorname{Bin}(s_n,1/4).
\]
The same $K_n^{\mathrm{ext}}$ works for every internal sign vector. Conditional on $\mathcal F_n$, $v_3(\det K_n^{\mathrm{ext}})$ is fixed, and the only varying term in the valuation identity is $B_n$, with the displayed binomial law.

\begin{theorem}[Determinant anti-concentration]
\label{thm:det-anti}
There are constants $\delta,c_0,C_0>0$ and $n_0\ge4$ such that, for $n\ge n_0$ and a uniform rooted diagram in either of the two rooted Chapman ensembles,
\[
 \sup_{a\ge1}\Pr\{\det K(D_n)=a\}
 \le
 C_0e^{-c_0n}
 +\sqrt{\frac{2\pi}{3\lfloor\delta n\rfloor}}
 =O(n^{-1/2}).
\]
The estimate is uniform over determinant targets $a=a_n$ that may vary with $n$.
\end{theorem}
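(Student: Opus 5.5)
The plan is to split on the good event $\mathcal G_n$ and then condition on $\mathcal F_n$. Fix an arbitrary target $a\ge1$, which may depend on $n$. We write
\[
 \Pr\{\det K(D_n)=a\}\le \Pr(\mathcal G_n^c)+\mathbb E\bigl[\mathbf 1_{\mathcal G_n}\Pr\{\det K(D_n)=a\mid\mathcal F_n\}\bigr].
\]
The first term is at most $C_0e^{-c_0n}$ by the exponential bound on $\Pr(\mathcal G_n^c)$ quoted from Chapman's pattern theorem and the shadow-selection lemma. The constants $\delta,c_0,C_0$ are exactly those in that statement. We take $n_0$ large enough that $s_n=\lfloor\delta n\rfloor\ge1$.

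On $\mathcal G_n$ we use the valuation identity \eqref{eq:hidden-binomial}. Since $K_n^{\mathrm{ext}}$ is $\mathcal F_n$-measurable, the integer $k:=v_3(a)-v_3(\det K_n^{\mathrm{ext}})$ is fixed on each $\mathcal F_n$-fiber. The event $\det K(D_n)=a$ forces $B_n=k$. Therefore
\[
 \Pr\{\det K(D_n)=a\mid\mathcal F_n\}\le \max_{k}\Pr\{\operatorname{Bin}(s_n,1/4)=k\}
\]
almost surely on $\mathcal G_n$, using $\mathcal L(B_n\mid\mathcal F_n)=\operatorname{Bin}(s_n,1/4)$. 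This bound does not depend on $a$, which gives the claimed uniformity over moving targets. One could refine it by also requiring $\det K_n^{\mathrm{ext}}\,3^{k}=a$, but that is unnecessary here.

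The remaining step is a uniform bound on the largest binomial atom:
\[
 \max_k\binom{s}{k}p^k(1-p)^{s-k}\le \sqrt{\frac{1}{2\pi s p(1-p)}}\cdot C
\]
with the constant arranged to give $\sqrt{2\pi/(3s)}$ for $p=1/4$, where $sp(1-p)=3s/16$. I would prove this via Stirling bounds with explicit error terms, $\sqrt{2\pi m}(m/e)^m\le m!\le e^{1/(12m)}\sqrt{2\pi m}(m/e)^m$, applied at the mode $k\in\{\lfloor(s+1)p\rfloor,\lceil(s+1)p\rceil-1\}$. The cases $k=0$ or $k=s$ are handled separately: there $(3/4)^s$ is tiny. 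An alternative is a Fourier bound, $\Pr\{B=k\}\le\frac1{2\pi}\int_{-\pi}^{\pi}|q+pe^{it}|^s\,dt$ with $|q+pe^{it}|^2=1-2pq(1-\cos t)\le e^{-4pq\,t^2/\pi^2}$. This gives $\le \frac{1}{2\pi}\sqrt{\pi^3/(2pq s)}=\sqrt{\pi/(8pqs)}$, which for $pq=3/16$ equals $\sqrt{2\pi/(3s)}$ exactly. I would use the Fourier route, since it yields the stated constant with no case analysis.

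The probabilistic and algebraic steps are routine given the cited SI lemmas. The only real obstacle is the step already packaged in \eqref{eq:simultaneous-factorization}: the factorization and the exterior knot must be valid simultaneously for all internal sign vectors, so that conditioning is legitimate. Since this is taken as established, the main technical care in the proof itself is the explicit constant in the atom bound. Integrating the conditional bound over $\mathcal G_n$ and adding $\Pr(\mathcal G_n^c)$ completes the estimate, and $O(n^{-1/2})$ follows.
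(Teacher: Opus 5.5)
Your proposal is correct and follows the paper's proof essentially step for step. You split off $\mathcal G_n^c$, condition on $\mathcal F_n$ so that $\det K(D_n)=a$ pins $B_n$ to the $\mathcal F_n$-measurable integer $v_3(a)-v_3(\det K_n^{\mathrm{ext}})$, and bound the largest atom of $\operatorname{Bin}(s_n,1/4)$ by Fourier inversion, as the paper does; your explicit estimate $|q+pe^{it}|^2\le e^{-4pqt^2/\pi^2}$ fills in the step the paper states only as ``Fourier inversion gives'' and reproduces exactly its constant $\sqrt{2\pi/(3s_n)}$.
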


\begin{proof}
Fix a target $a$.  On $\mathcal G_n$, condition on $\mathcal F_n$.  \eqref{eq:hidden-binomial} forces $B_n$ to equal the single $\mathcal F_n$-measurable integer
\[
 v_3(a)-v_3(\det K_n^{\mathrm{ext}}),
\]
when that integer is admissible.  If $B\sim\operatorname{Bin}(m,1/4)$, Fourier inversion gives
\[
 \sup_k\Pr\{B=k\}\le\sqrt{\frac{2\pi}{3m}}.
\]
Thus the conditional target probability is bounded by this quantity on $\mathcal G_n$.  Taking expectations and adding $\Pr(\mathcal G_n^c)$ proves the claim.
\end{proof}

If $p_a=\Pr\{\det K(D_n)=a\}$, then
\[
 \alpha_n(\det)=\sum_a p_a^2
 \le \sup_a p_a
 =O(n^{-1/2}),
\]
and hence
\[
 H_2(\det K(D_n))\ge\frac12\log n-O(1).
\]
In particular, two independent diagrams have different determinants with probability $1-O(n^{-1/2})$. Section~\ref{sec:black-holes} applies this estimate to black-hole probabilities and databases of independent diagrams.

\section{Vanishing computational black holes}
\label{sec:black-holes}
The determinant bound has an algorithmic interpretation. Let $I$ be a sound knot invariant, so $I(D)\ne I(D')$ certifies inequivalence, and let $\mathsf{Decide}$ be a complete knot-equivalence procedure. They give the invariant-first algorithm
\begin{equation}
 \mathsf A_I(D,D')=
 \begin{cases}
  \textsc{No},& I(D)\ne I(D'),\\
  \mathsf{Decide}(D,D'),& I(D)=I(D').
 \end{cases}
 \label{eq:invariant-first-wrapper}
\end{equation}
For each complexity $n$, let $\Omega_n$ denote the space of diagram inputs and let $\mu_n$ be the probability law from which a diagram is drawn.  Define the \emph{invariant black hole}
\begin{equation}
 \mathcal B_n(I):=
 \{(D,D')\in\Omega_n^2:I(D)=I(D')\}.
 \label{eq:black-hole-definition}
\end{equation}
Generic-case complexity was introduced for decision problems by Kapovich et al. and developed in subsequent work on decidable and undecidable problems, including the halting problem \cite{kapovichetal2003,gilmanetal2007,hamkinsmiasnikov2006,myasnikovrybalov2008}.  Later work uses the phrase ``black hole of the algorithm'' for the exceptional input set \cite{rybalov2013}.  Here $\mathcal B_n(I)$ means only the set of pairs on which $\mathsf A_I$ calls $\mathsf{Decide}$.  It may contain equivalent knots, inequivalent knots, easy instances, and difficult instances; membership says nothing about intrinsic hardness.

\begin{proposition}[Black-hole and workload identities]
\label{prop:black-hole-identity}
Let $D_n,D_n'$ be independent with law $\mu_n$.  Then
\begin{equation}
 \Pr\{(D_n,D_n')\in\mathcal B_n(I)\}
 =\Pr\{I(D_n)=I(D_n')\}
 =\alpha_n(I).
 \label{eq:black-hole-mass}
\end{equation}
For an independent database $D_n^{(1)},\ldots,D_n^{(M)}$ of copies of $D_n$, let $N_v$ be the observed number of objects with invariant value $v$.  The number $F_M(I)$ of all-pairs fallback calls made by~\eqref{eq:invariant-first-wrapper} is exactly
\begin{equation}
 F_M(I)
 =\sum_v\binom{N_v}{2}
 =C_M(I),
 \qquad
 \mathbb E F_M(I)=\binom M2\alpha_n(I).
 \label{eq:fallback-workload}
\end{equation}
\end{proposition}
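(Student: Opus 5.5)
The plan is to verify the two identities directly from the definitions, by rewriting each event or count in terms of the fibers of $I$ and then using independence and linearity of expectation.

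For \eqref{eq:black-hole-mass}, the first equality is just the definition \eqref{eq:black-hole-definition}: the event $(D_n,D_n')\in\mathcal B_n(I)$ is exactly the event $I(D_n)=I(D_n')$. For the second, partition this event by the common value $v$. The invariant takes countably many values, so countable additivity applies, and independence gives
\[
 \Pr\{I(D_n)=I(D_n')\}=\sum_v\Pr\{I(D_n)=v\}\Pr\{I(D_n')=v\}=\sum_v p_v^2=\alpha_n(I),
\]
matching \eqref{eq:opening-alpha}. Measurability of $I\circ D_n$ is implicit in the setup, since $I$ has discrete values on the input space.

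For \eqref{eq:fallback-workload}, note that in an all-pairs comparison the algorithm \eqref{eq:invariant-first-wrapper} is run once on each unordered pair $\{i,j\}$ with $i<j$. By construction it calls $\mathsf{Decide}$ exactly when $I(D_n^{(i)})=I(D_n^{(j)})$. Hence $F_M(I)=\sum_{i<j}\mathbf 1_{\{I(D_n^{(i)})=I(D_n^{(j)})\}}=C_M(I)$, pathwise. Grouping these pairs by their common value $v$, a pair contributes exactly when both indices lie in the set of $N_v$ records with value $v$. There are $\binom{N_v}{2}$ such pairs, and summing over $v$ gives $\sum_v\binom{N_v}{2}$. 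Only finitely many $N_v$ are nonzero, so the sum is finite. Finally, linearity of expectation together with the pair computation above, applied to each of the $\binom M2$ independent pairs $(D_n^{(i)},D_n^{(j)})$, yields $\mathbb E F_M(I)=\binom M2\alpha_n(I)$, in agreement with \eqref{eq:opening-workload}.

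There is no real obstacle here. The only point needing care is the convention that an all-pairs comparison runs the algorithm once per unordered pair, rather than once per ordered pair or including self-pairs. I would state this convention explicitly, since it is what makes $F_M(I)$ equal $C_M(I)$ rather than $2C_M(I)$ or $2C_M(I)+M$.
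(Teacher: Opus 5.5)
Your proposal is correct and follows the paper's proof. The first identity is the definition of $\mathcal B_n(I)$ combined with the pair-collision formula; the count $\sum_v\binom{N_v}{2}=C_M(I)$ comes from grouping unordered pairs by bucket; and the expectation is linearity over the $\binom M2$ pairs, each distributed as $(D_n,D_n')$. One wording point: linearity only needs each pair to consist of two independent draws, not that the pairs be mutually independent (they are not).
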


\begin{proof}
The event in~\eqref{eq:black-hole-mass} is the equality event defining the collision mass.  Each bucket of size $N_v$ contributes one fallback call for each of its $\binom{N_v}{2}$ unordered pairs, which proves~\eqref{eq:fallback-workload}; expectation then uses the pair-collision identity from Section~\ref{sec:birthday-law}.
\end{proof}

The identity is also proved in \SILocation{\SIFallbackIdentities{}}.  Thus $\alpha_n(I)$ is the probability that one random comparison calls $\mathsf{Decide}$ and the expected fraction of database pairs for which this call is made; its inverse square root is the birthday scale.  Figure~\ref{fig:black-hole-architecture} displays these identities.

\subsection{Random pairs and fixed targets}
Theorem~\ref{thm:det-anti} gives a vanishing black hole for determinant-first comparison.

\begin{corollary}[Vanishing random-pair black hole]
\label{cor:random-pair-black-hole}
Let $D_n,D_n'$ be independent uniform rooted diagrams in either of the two rooted Chapman ensembles considered here.  Then
\begin{equation}
 \Pr\{(D_n,D_n')\in\mathcal B_n(\det)\}
 =\alpha_n(\det)
 =O(n^{-1/2}).
 \label{eq:random-pair-black-hole-rate}
\end{equation}
Consequently, $\mathsf A_{\det}$ returns a sound negative answer on a $1-O(n^{-1/2})$ fraction of random pairs before invoking the complete procedure.
\end{corollary}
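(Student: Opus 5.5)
The plan is to chain together three results already established in the paper: Proposition~\ref{prop:black-hole-identity}, the elementary bound $\alpha_n\le p_n^{\max}$ from Section~\ref{sec:birthday-law}, and Theorem~\ref{thm:det-anti}.

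First, apply identity~\eqref{eq:black-hole-mass} with $I=\det K(\cdot)$. The invariant black hole $\mathcal B_n(\det)$ is exactly the set of pairs with equal determinants. The determinant is a knot invariant, so it takes the same value on all diagrams of a given knot and is therefore a well-defined function of the diagram. Hence, for independent $D_n,D_n'$ drawn from the uniform rooted Chapman law,
\[
\Pr\{(D_n,D_n')\in\mathcal B_n(\det)\}=\alpha_n(\det).
\]

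Second, bound the collision mass by the largest atom. Write $p_a=\Pr\{\det K(D_n)=a\}$ for $a\ge1$; the determinant of a knot is odd and in particular positive, so these values exhaust the support. Then
\[
\alpha_n(\det)=\sum_a p_a^2\le \sup_a p_a\sum_a p_a=\sup_a p_a.
\]
Theorem~\ref{thm:det-anti} gives
\[
\sup_a p_a\le C_0e^{-c_0n}+\sqrt{2\pi/(3\lfloor\delta n\rfloor)}
\]
for $n\ge n_0$, uniformly over targets that may move with $n$. This uniformity is the essential point, since the most likely determinant value may change with $n$. The right-hand side is $O(n^{-1/2})$. For the finitely many $n<n_0$, the trivial bound $\alpha_n\le1$ is absorbed into the implied constant. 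This proves~\eqref{eq:random-pair-black-hole-rate}.

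Third, the algorithmic consequence follows from the definition~\eqref{eq:invariant-first-wrapper}. On the complement of $\mathcal B_n(\det)$, the algorithm $\mathsf A_{\det}$ returns \textsc{No} without calling $\mathsf{Decide}$. This answer is sound because unequal determinants certify that the knots are inequivalent. The complement has probability $1-\alpha_n(\det)=1-O(n^{-1/2})$.

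No step here is a real obstacle; all of the difficulty was absorbed into Theorem~\ref{thm:det-anti}. The only points needing care are the uniformity over moving targets and the observation that both ensembles are covered, because Theorem~\ref{thm:det-anti} holds for each of them.
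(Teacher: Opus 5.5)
Your proof is correct and follows the paper's route: the first equality comes from Proposition~\ref{prop:black-hole-identity}, and the rate comes from $\alpha_n(\det)\le\sup_a p_a$ combined with Theorem~\ref{thm:det-anti}. The algorithmic consequence then follows directly from the definition of $\mathsf A_{\det}$, and the care you take over uniformity in moving targets and over the finitely many $n<n_0$ is appropriate.
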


There are two distinct comparison problems.  In a random-pair comparison, both determinants vary with the sample.  In a fixed-target comparison, one knot $K_*$ is prescribed and only the random diagram varies.  Its determinant must then equal the fixed integer $\det K_*$.  In the trefoil-slot construction, this can happen only when the number of trefoil contributions stays bounded, although its mean grows linearly with $n$.  This yields an exponential estimate rather than the polynomial random-pair estimate.

\begin{corollary}[Exponential fixed-target black hole]
\label{cor:fixed-target-black-hole}
For every fixed knot $K_*$, there are constants $C_{K_*},c_{K_*}>0$ such that
\begin{equation}
 \Pr\{\det K(D_n)=\det K_*\}
 \le C_{K_*}e^{-c_{K_*}n}.
 \label{eq:fixed-target-exponential}
\end{equation}
Thus determinant-first comparison with $K_*$ invokes its complete fallback with exponentially small probability.
\end{corollary}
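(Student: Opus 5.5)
The plan is to reuse the conditional binomial structure from the proof of Theorem~\ref{thm:det-anti}, but to exploit the fixed target through a lower-tail bound rather than a local one. Put $a_*=\det K_*$, a fixed positive integer; it is odd and nonzero for every knot, though only $a_*\ge1$ is needed. Split the event $\{\det K(D_n)=a_*\}$ according to $\mathcal G_n$ and its complement. The complement contributes at most $C_0e^{-c_0n}$, so it suffices to bound the conditional probability on $\mathcal G_n$.

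On $\mathcal G_n$, condition on $\mathcal F_n$ and use the identity \eqref{eq:hidden-binomial}, namely $\det K(D_n)=\det(K_n^{\mathrm{ext}})\,3^{B_n}$. Because $\det K_n^{\mathrm{ext}}$ is a positive integer, we have $v_3(\det K_n^{\mathrm{ext}})\ge0$. Equality with $a_*$ therefore forces
\[
 B_n=v_3(a_*)-v_3(\det K_n^{\mathrm{ext}})\le v_3(a_*)=:k_*,
\]
and $k_*$ is a constant depending only on $K_*$. This reduces the conditional probability to a binomial lower tail:
\[
 \Pr\{\det K(D_n)=a_*\mid\mathcal F_n\}\le\Pr\{\operatorname{Bin}(s_n,1/4)\le k_*\}\quad\text{a.s. on }\mathcal G_n .
\]

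Next, bound the lower tail. Here $s_n=\lfloor\delta n\rfloor$ and $n\ge n_0$. One route is the direct estimate
\[
 \Pr\{\operatorname{Bin}(s,1/4)\le k_*\}\le\sum_{k\le k_*}\binom{s}{k}(3/4)^{s-k}\le (k_*+1)\,s^{k_*}(4/3)^{k_*}(3/4)^{s}.
\]
Alternatively, Chernoff/Hoeffding gives $\exp(-2(s/4-k_*)^2/s)$ once $s>4k_*$. In either form the bound is at most $C'e^{-c'n}$, with $c'$ any constant below $\delta\log(4/3)$ and $C'$ depending on $k_*$. Taking expectations over $\mathcal F_n$ and adding $C_0e^{-c_0n}$ gives the claim for large $n$. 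For the finitely many $n<n_0$, or $n$ with $s_n\le 4k_*$, enlarge $C_{K_*}$, since probabilities are at most $1$.

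There is no serious obstacle, because all the structure comes from the established factorization. The one point needing care is that $v_3(\det K_n^{\mathrm{ext}})\ge0$, which requires $\det K_n^{\mathrm{ext}}\ne0$. This holds because knot determinants are odd, and in any case $\det K(D_n)=a_*\ne0$ forces $\det K_n^{\mathrm{ext}}\ne0$. The constants then depend on $K_*$ only through $v_3(\det K_*)$.
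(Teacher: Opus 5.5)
Your proof is correct and follows essentially the same route as the paper: on $\mathcal G_n$ the factorization $\det K(D_n)=\det(K_n^{\mathrm{ext}})3^{B_n}$ forces $B_n\le v_3(\det K_*)$, a binomial lower-tail estimate for $\operatorname{Bin}(\lfloor\delta n\rfloor,1/4)$ makes this exponentially unlikely, and $\Pr(\mathcal G_n^c)\le C_0e^{-c_0n}$ covers the rest. One small slip is that only your direct estimate gives every rate below $\delta\log(4/3)$, since the Hoeffding bound behaves like $e^{-s_n/8}$ (rate about $\delta/8$); this is still exponential and does not affect the conclusion.
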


The proof is given in \SILocation{\SIFixedDeterminant{} and \SIFixedKnot{}}.  If $a=\det K_*$, then
\[
 \det(K_n^{\mathrm{ext}})3^{B_n}=a
 \quad\Longrightarrow\quad
 B_n\le v_3(a).
\]
On $\mathcal G_n$, the conditional law $B_n\mid\mathcal F_n\sim\operatorname{Bin}(\lfloor\delta n\rfloor,1/4)$ has mean proportional to $n$, whereas $v_3(a)$ is fixed.  A binomial lower-tail estimate, together with the exponentially small probability of $\mathcal G_n^c$, gives the fixed-target bound.  The maximal-atom estimate in Theorem~\ref{thm:det-anti} gives the random-pair bound:
\[
 \text{random pair: }O(n^{-1/2}),
 \qquad
 \text{fixed target: }O(e^{-cn}).
\]

\subsection{Refining invariants}
The determinant is a coarse fingerprint.  A richer invariant refines it when the richer value determines the determinant; every fiber of the richer invariant then lies inside a determinant fiber.  Accordingly, call an invariant $J$ \emph{determinant-refining} if there is a deterministic map $g$ with
\[
 \det K=g(J(K))
\]
for every knot in its stated normalization.  Then every $J$-fiber lies inside a determinant fiber, so
\begin{equation}
 \alpha_n(J)\le\alpha_n(\det),
 \qquad
 \sup_w\Pr\{J(D_n)=w\}
 \le\sup_a\Pr\{\det K(D_n)=a\}.
 \label{eq:refinement-monotonicity}
\end{equation}
Hence every determinant-refining fingerprint inherits the $O(n^{-1/2})$ random-pair collision bound, and every fixed normalized value is exponentially rare.  In particular, $\det K=|\Delta_K(-1)|$ in the usual normalization, so the Alexander polynomial determines the determinant and inherits both bounds.  \SIDeterminantRefinementTable{} in the SI Appendix records the required normalization conventions for the Alexander, Jones, and HOMFLY--PT polynomials, full bigraded Khovanov homology, and full bigraded knot Floer homology.

\eqref{eq:refinement-monotonicity} compares collision probabilities, not evaluation times.  The algorithmic conclusion in Corollary~\ref{cor:random-pair-black-hole} therefore uses the determinant itself.  It can be computed in polynomial bit complexity from a Goeritz matrix by fraction-free integer elimination \cite{gordonlitherland1978,bareiss1968}; the proof is in \SILocation{\SIDeterminantComplexity{}}.  When the determinants differ, this polynomial-time branch gives a conclusive negative answer on a fraction $1-O(n^{-1/2})$ of random pairs.

When the determinants agree, the algorithm calls a complete knot-equivalence procedure.  Such a procedure exists by normal-surface methods \cite{haken1961,hemion1992}, and the complexity of knot and link decision problems is discussed in \cite{hasslagariaspippenger1999,lackenby2021}.  The theorem controls how often this complete branch is called.  The expected and worst-case running times of the full algorithm still depend on that unrestricted procedure.

\subsection{Machine learning outside and inside the black hole}
\label{subsec:machine-learning}
A classifier can attain high accuracy by reproducing a cheap, incomplete invariant while failing to distinguish knots inside its collision fibers.  This possibility is relevant to machine-learning studies of invariant prediction, conjecture discovery from learned correlations, geometric classification, and embeddings designed to be invariant under changes of diagram \cite{daviesetal2021,cravenetal2023,sleimanetal2024,halversonruehle2025}.  Halverson and Ruehle found strong correlations between their learned embeddings and Goeritz-matrix data across the architectures they tested \cite{halversonruehle2025}.  Since the determinant is computed from a reduced Goeritz matrix, Theorem~\ref{thm:det-anti} and the calculation below show that a determinant-only rule already has balanced accuracy $1-O(n^{-1/2})$ under the stated random-pair law.

Let
\[
 \mathcal K_n=\{K(D_n)\cong K(D_n')\},
 \qquad q_n=\Pr(\mathcal K_n)<1.
\]
Consider a balanced binary benchmark whose positive class consists of equivalent pairs and whose negative class is the independent pair law conditioned on $\mathcal K_n^c$.  The invariant-only rule
\[
 h_I(D,D')=\mathbf 1_{\{I(D)=I(D')\}}
\]
uses the convention that $1$ means ``equivalent.''  Its true-positive rate is one because equivalent knots have equal invariant values.  Balanced accuracy is the mean of the true-positive and true-negative rates.  Here it is
\begin{equation}
 \operatorname{BAcc}(h_I)
 =1-\frac12(\frac{\alpha_n(I)-q_n}{1-q_n})
 \ge
 1-\frac{\alpha_n(I)}{2}.
 \label{eq:ml-balanced-accuracy}
\end{equation}
The identity follows from the inclusion $\mathcal K_n\subseteq\mathcal B_n(I)$ and is proved in \SILocation{\SIMachineLearningBenchmark{}}.  For the normalized Alexander polynomial, $\det K=|\Delta_K(-1)|$, so Alexander equality implies determinant equality and $\alpha_n(\Delta)\le\alpha_n(\det)=O(n^{-1/2})$.  Thus this classically incomplete rule has balanced accuracy $1-O(n^{-1/2})$.  For a fixed target $K_*$, the corresponding balanced benchmark (positives representing $K_*$ and negatives drawn from $D_n$ conditioned on $K(D_n)\not\cong K_*$) has invariant-only balanced accuracy $1-O(e^{-cn})$.

Unequal invariant values give sound negative answers, but equality leaves $\mathcal B_n(I)$ unresolved.  Aggregate accuracy can therefore be large even when a classifier uses only information captured by polynomial invariants, an instance of shortcut learning \cite{geirhosetal2020}.  Accuracy conditional on $I(D)=I(D')$ measures what the classifier does within a collision fiber.  Relevant tests use inequivalent pairs conditioned to have the same chosen invariant, including mutant and composite-knot examples \cite{sleimanetal2024}.

\subsection{Determinant comparison in independent databases}
An independent database of $M_n$ diagrams contains $\binom{M_n}{2}$ pairs.  Each pair reaches the complete procedure with probability $O(n^{-1/2})$, so the expected number of fallback calls is $O(M_n^2n^{-1/2})$.  This expectation tends to zero when $M_n=o(n^{1/4})$; Markov's inequality then shows that, with high probability, no pair reaches the complete procedure.  The workload identity makes this argument precise.

\begin{corollary}[No fallback below the proved scale]
\label{cor:no-fallback-database}
Let $D_n^{(1)},\ldots,D_n^{(M_n)}$ be independent uniform rooted diagrams from the complexity-$n$ law in either of the two rooted Chapman ensembles considered here.  For determinant-first all-pairs comparison,
\begin{equation}
 \mathbb E F_{M_n}(\det)
 =O(M_n^2n^{-1/2}),
 \label{eq:expected-fallback-count}
\end{equation}
and
\begin{equation}
 M_n=o(n^{1/4})
 \quad\Longrightarrow\quad
 \Pr\{F_{M_n}(\det)>0\}\longrightarrow0.
 \label{eq:no-fallback-scale}
\end{equation}
Thus, with high probability, the determinant certifies every pair in such a database as inequivalent, and the complete decider is never called.
\end{corollary}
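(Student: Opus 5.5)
The plan is to combine the workload identity of Proposition~\ref{prop:black-hole-identity} with the collision bound of Corollary~\ref{cor:random-pair-black-hole}, and then apply Markov's inequality.

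First, I would apply \eqref{eq:fallback-workload} with $I=\det$ and the complexity-$n$ law. The records $D_n^{(1)},\ldots,D_n^{(M_n)}$ are independent copies of $D_n$, so each unordered pair $\{i,j\}$ has its determinants agree with probability exactly $\alpha_n(\det)$. This holds by \eqref{eq:black-hole-mass}, since a pair of distinct indices is distributed as $(D_n,D_n')$. Linearity of expectation over the $\binom{M_n}{2}$ pairs then gives
\[
\mathbb E F_{M_n}(\det)=\binom{M_n}{2}\alpha_n(\det).
\]

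Second, I would bound $\alpha_n(\det)$ using Theorem~\ref{thm:det-anti}. The chain is $\alpha_n(\det)=\sum_a p_a^2\le\sup_a p_a\le C_0e^{-c_0n}+\sqrt{2\pi/(3\lfloor\delta n\rfloor)}$ for $n\ge n_0$, which is \eqref{eq:random-pair-black-hole-rate}. Using $\binom{M_n}{2}\le M_n^2/2$, this yields $\mathbb E F_{M_n}(\det)\le C M_n^2 n^{-1/2}$ for a constant $C$ and all large $n$. That proves \eqref{eq:expected-fallback-count}. The finitely many $n<n_0$ do not affect an $O(\cdot)$ statement, and $\lfloor \delta n\rfloor\ge \delta n/2$ once $n\ge 2/\delta$.

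Third, $F_{M_n}(\det)$ is a nonnegative integer, so Markov's inequality gives
\[
\Pr\{F_{M_n}(\det)>0\}=\Pr\{F_{M_n}(\det)\ge1\}\le \mathbb E F_{M_n}(\det)\le C M_n^2n^{-1/2}.
\]
If $M_n=o(n^{1/4})$, then $M_n^2n^{-1/2}=(M_n n^{-1/4})^2\to0$, which proves \eqref{eq:no-fallback-scale}. Equivalently, this is the first implication of Theorem~\ref{thm:inverse-birthday}, because $M_n^2\alpha_n(\det)\to0$.

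For the final interpretive sentence, on the event $\{F_{M_n}(\det)=0\}$ every pair of records has distinct determinants. Since the determinant is an invariant, each such pair is certified inequivalent, and $\mathsf{Decide}$ is never invoked.

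No step here is a real obstacle; all the depth sits in Theorem~\ref{thm:det-anti}. The only point needing care is the first step: the pair probability must be exactly $\alpha_n(\det)$ for every pair, which requires independence of the records and not merely identical marginals.
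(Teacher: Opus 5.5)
Your proposal is correct and follows the paper's proof: you combine the workload identity \eqref{eq:fallback-workload} with the random-pair rate \eqref{eq:random-pair-black-hole-rate} to get \eqref{eq:expected-fallback-count}, then apply Markov's inequality to $F_{M_n}(\det)$. Your extra bookkeeping for small $n$ and for $\lfloor\delta n\rfloor$ is accurate. The independence you flag for the pair probabilities is only needed pairwise, which the independence hypothesis gives.
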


\begin{proof}
Equations~(\ref{eq:fallback-workload}) and (\ref{eq:random-pair-black-hole-rate}) give~\eqref{eq:expected-fallback-count}.  Markov's inequality yields
\[
 \Pr\{F_{M_n}(\det)>0\}
 \le \mathbb E F_{M_n}(\det)
 =O(M_n^2n^{-1/2}),
\]
which tends to zero under the stated hypothesis.
\end{proof}

The full corollary is in \SILocation{\SINoFallbackDatabase{}}.  The upper bound $\alpha_n(\det)=O(n^{-1/2})$ proves the no-fallback conclusion for $M_n=o(n^{1/4})$. The birthday scale is
\[
 M_{\mathrm B}(n)=\alpha_n(\det)^{-1/2},
\]
and the present estimate gives $M_{\mathrm B}(n)=\Omega(n^{1/4})$. No matching lower bound on $\alpha_n(\det)$ is known, so the order of $M_{\mathrm B}(n)$ remains undetermined.

\section{Birthdays in finite knot databases}
\label{sec:finite-databases}
Many knot databases are fixed catalogs rather than independent samples from a random-diagram model.  A census consists of a fixed list of distinct entries, and a random database of size $M$ is then a uniformly chosen $M$-entry subset, sampled without replacement.  This differs from Sections~\ref{sec:trefoil-slots} and \ref{sec:black-holes}, where rooted diagrams are sampled independently from Chapman's ensembles.  All formulas in this section use the finite-population law.

Let a census of $T$ distinct entries be partitioned into nonempty invariant fibers of sizes $m_1,\ldots,m_R$.  For two known-distinct entries, the collision probability is
\begin{equation}
 \alpha^{\neqp}
 =\frac{\sum_v m_v(m_v-1)}{T(T-1)}.
 \label{eq:finite-replacement-correction}
\end{equation}
Choose a uniform $M$-subset of the census.  Let $C_M^{\mathrm{fp}}$ count its equal-invariant pairs, where $\mathrm{fp}$ denotes finite population.  Write $e_M(m_1,\ldots,m_R)$ for the sum, over all choices of $M$ distinct fibers, of the product of their sizes.  This is the $M$th elementary symmetric polynomial.

\begin{proposition}[Finite-population birthday formulas]
\label{prop:exact-finite-census}
For $0\le M\le T$,
\begin{equation}
 \mathbb E C_M^{\mathrm{fp}}
 =\binom M2\alpha^{\neqp},
 \qquad
 \Pr\{C_M^{\mathrm{fp}}=0\}
 =\frac{e_M(m_1,\ldots,m_R)}{\binom TM}.
 \label{eq:exact-finite-birthday}
\end{equation}
Consequently the fiber sizes determine the no-collision probability for every $M$.
\end{proposition}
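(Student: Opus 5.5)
The plan is to treat the uniform $M$-subset $S$ as a random variable on the $\binom TM$ equally likely subsets and handle the two identities separately: the expectation by linearity over unordered pairs, and the no-collision probability by an exact count of collision-free subsets. Label the fibers $1,\ldots,R$ with sizes $m_1,\ldots,m_R$, so that $\sum_v m_v=T$.

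For the expectation, write
\[
 C_M^{\mathrm{fp}}=\sum_{\{x,y\}\subseteq S}\mathbf 1\{I(x)=I(y)\},
\]
where the sum runs over unordered pairs of distinct census entries both lying in $S$. By exchangeability of uniform sampling without replacement, any fixed pair $\{x,y\}$ of distinct entries lies in $S$ with probability $\binom{T-2}{M-2}/\binom TM=M(M-1)/\bigl(T(T-1)\bigr)$. The number of unordered equal-invariant pairs in the census is $\sum_v\binom{m_v}2=\tfrac12\sum_v m_v(m_v-1)$. Multiplying these two quantities gives
\[
 \mathbb E C_M^{\mathrm{fp}}=\binom M2\,\frac{\sum_v m_v(m_v-1)}{T(T-1)}=\binom M2\alpha^{\neqp},
\]
by \eqref{eq:finite-replacement-correction}. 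Equivalently, two distinct elements drawn from $S$ form a uniform pair of distinct census entries, whose collision probability is $\alpha^{\neqp}$. For $M\le1$ both sides vanish, so these cases are consistent.

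For the no-collision probability, note that $C_M^{\mathrm{fp}}=0$ exactly when $S$ meets each fiber in at most one entry. Such a subset is determined uniquely by two choices: the set $U$ of $M$ fibers it meets, and one entry from each fiber in $U$. The map sending $S$ to $(U,(x_u)_{u\in U})$ is a bijection onto these pairs, so the number of collision-free subsets is
\[
 \sum_{|U|=M}\prod_{u\in U}m_u=e_M(m_1,\ldots,m_R).
\]
Dividing by $\binom TM$ gives the formula. The boundary cases behave correctly: $e_0=1$, and $e_M=0$ when $M>R$, which matches the pigeonhole principle.

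The final sentence of the proposition follows immediately, because the right-hand side depends only on the multiset $\{m_v\}$; note that $T=\sum_v m_v$ is itself determined by it. Nothing here is deep. The only point requiring care is to verify that the correspondence in the counting step is a genuine bijection, so that no subset is double counted. This holds because the fibers are disjoint: an entry determines its fiber.
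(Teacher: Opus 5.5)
Your proposal is correct and matches the paper's proof. The expectation comes from each sampled unordered pair being a uniform pair of distinct census entries, which you make explicit through linearity and the inclusion probability $M(M-1)/(T(T-1))$; the no-collision count comes from choosing one entry from each of $M$ distinct fibers, which gives $e_M(m_1,\ldots,m_R)$, and your bijection and boundary-case checks spell out details the paper leaves implicit.
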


\begin{proof}
Every sampled pair is a uniform pair of distinct census entries, which gives the expectation.  A collision-free subset chooses one entry from each of $M$ distinct fibers.  Summing the products of the chosen fiber sizes gives $e_M(m_1,\ldots,m_R)$.
\end{proof}

Further moment formulas and the without-replacement zero--one law are proved in \SILocation{\SIFiniteMoments{}, \SIExactFiniteCurves{}, and \SIFiniteBirthdayLaw{}}.  If only the census size $T$ and the number $R$ of occupied invariant values are known, they imply the lower bound
\begin{equation}
 \alpha^{\neqp}\ge\frac{T-R}{R(T-1)}.
 \label{eq:support-compression-bound}
\end{equation}
At fixed $T$ and $R$, balanced integer fibers minimize collision mass and maximize the no-collision probability for every $M$.  In the synthetic $1000$-entry example in \SILocation{\SIFiniteBenchmark{}}, the collision probability first reaches one half at $M_{50}=17$ for a fingerprint with $900$ occupied values and at $M_{50}=38$ for a balanced fingerprint with $500$ occupied values.  Thus $R$ alone does not determine the collision scale; it depends on the full fiber profile.

\section{Adaptive invariant cascades}
\label{sec:cascades}
An invariant cascade evaluates a first invariant on every sampled entry and groups entries with the same value into buckets.  An entry alone in its sampled bucket forms no unresolved pair and does not proceed.  Only entries in buckets of size at least two pass to the next invariant.  Fix a $T$-entry population and sample a uniform $M$-entry subset.  After stage $j$, let $J_j=(I_1,\ldots,I_j)$ be the joint prefix fingerprint.  Two entries lie in the same $J_j$-bucket precisely when they agree on all of $I_1,\ldots,I_j$, and $m_{J_j,v}$ denotes the corresponding population fiber size.

For any prefix $J$, let $S_J(M)$ be the expected number of entries that are alone in their sampled $J$-bucket, let $U_J(M)$ be the expected number that remain in nonsingleton buckets and pass to the next stage, and let $P_J(M)$ be the expected number of sampled pairs with equal $J$-values.  Then
\begin{align}
 S_J(M)
 &=\frac{1}{\binom TM}\sum_v m_{J,v}\binom{T-m_{J,v}}{M-1},
 \label{eq:expected-singletons}\\
 U_J(M)&=M-S_J(M),
 \label{eq:expected-survivors}\\
 P_J(M)&=\binom M2\alpha_J^{\neqp}.
 \label{eq:expected-unresolved-pairs}
\end{align}
These quantities use the joint prefix fibers and require no independence among invariant coordinates.

\begin{proposition}[Expected workload of an adaptive cascade]
\label{prop:adaptive-workload}
Let $1\le M\le T$.  Suppose $I_j$ costs $c_j$ per evaluated entry and the terminal complete procedure costs $f$ per unresolved pair.  If $I_1$ is evaluated on every sampled entry and each later $I_j$ only on entries in nonsingleton sampled $J_{j-1}$-buckets, then
\begin{equation}
 \mathbb E W
 =Mc_1+\sum_{j=2}^{k}c_jU_{J_{j-1}}(M)
   +fP_{J_k}(M).
 \label{eq:adaptive-workload}
\end{equation}
\end{proposition}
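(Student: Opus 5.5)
The proof is linearity of expectation applied to a decomposition of the total work into stage costs. I will write $W$ as a sum of random terms and identify the expectation of each with one of \eqref{eq:expected-singletons}--\eqref{eq:expected-unresolved-pairs}.

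\textbf{Decomposition.} For the sampled subset $\mathcal S$ of size $M$, let $N_j$ be the number of sampled entries on which $I_j$ is evaluated. The cascade evaluates $I_1$ on all of $\mathcal S$, so $N_1=M$. For $j\ge2$, an entry is evaluated at stage $j$ exactly when it has passed every earlier filter. The key claim is that this happens precisely when its sampled $J_{j-1}$-bucket has size at least two. Let $F$ be the number of unresolved pairs sent to the terminal procedure. Then
\[
 W=c_1M+\sum_{j=2}^k c_jN_j+fF .
\]

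\textbf{Survival at each stage.} I prove the key claim by induction on $j$. Suppose an entry $x$ lies in a sampled $J_{j-1}$-bucket containing another entry $y$. The pair $x,y$ agrees on $I_1,\ldots,I_{j-1}$. So for every $i<j$, the two entries share their $J_i$-bucket, and both survive every earlier stage. Conversely, suppose $x$ is evaluated at stage $j$. It survived stage $j-1$, so it shares its $J_{j-1}$-bucket with some other entry, and that entry must itself have been evaluated there. Here the one subtlety is that buckets at stage $i$ are formed only among survivors. Each $J_i$-bucket is a subset of a $J_{i-1}$-bucket, so the buckets formed among survivors coincide with the nonsingleton $J_i$-buckets of the full sample. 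Hence $N_j$ is the number of sampled entries in nonsingleton $J_{j-1}$-buckets, and $\mathbb E N_j=M-S_{J_{j-1}}(M)=U_{J_{j-1}}(M)$ by \eqref{eq:expected-survivors}.

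\textbf{The terminal term.} The unresolved pairs after stage $k$ are exactly the sampled pairs with equal $J_k$-values. A pair with equal $J_k$-values consists of two survivors, so it is never split off earlier. Therefore $F=C_M^{\mathrm{fp}}(J_k)$, and Proposition~\ref{prop:exact-finite-census}, applied to the $J_k$-fibers, gives $\mathbb E F=\binom M2\alpha^{\neqp}_{J_k}=P_{J_k}(M)$.

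\textbf{Remaining check.} For completeness I verify \eqref{eq:expected-singletons}. Entry $x$ in a population fiber of size $m$ is a singleton in the sample exactly when $x\in\mathcal S$ and the other $M-1$ entries avoid its fiber. This event has probability $\binom{T-m}{M-1}/\binom TM$. Summing over the $m$ entries of each fiber and then over fibers gives the formula.

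Linearity of expectation then yields \eqref{eq:adaptive-workload}. The only real obstacle is the survival claim: that filtering adaptively, stage by stage, produces the same survivors as the static nonsingleton condition on the joint prefix buckets. The induction above handles it, and no independence between invariant coordinates is used anywhere.
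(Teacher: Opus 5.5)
Your proof is correct and follows the paper's route. Like the paper, you write the total cost as $c_1M+\sum_{j\ge2}c_jN_j+fF$ and apply linearity of expectation, using $\mathbb E N_j=U_{J_{j-1}}(M)$, $\mathbb E F=P_{J_k}(M)$, and the singleton count behind $S_J(M)$. Your survival induction is harmless but not strictly needed, because the proposition's hypothesis already states that each later $I_j$ is evaluated on entries in nonsingleton sampled $J_{j-1}$-buckets.
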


The formula follows by linearity of expectation; details are in \SILocation{\SICascadeWorkload{}, \SICascadeOrdering{}, and \SICascadeBreakEven{}}.  For a fixed pair of invariants $A$ and $B$, either order ends with the same joint fingerprint $(A,B)$ and therefore the same terminal unresolved pairs.  The orders differ only in how many entries require the second evaluation.  Consequently,
\begin{equation}
 A\rightarrow B\ \text{is no more expensive than}\ B\rightarrow A
 \quad\Longleftrightarrow\quad
 c_A S_B(M)\le c_B S_A(M),
 \label{eq:two-stage-order}
\end{equation}
whereas the decision to append another invariant depends on its evaluation cost and the reduction in terminal pairs.  Write $P_{J,I}(M)$ for the expected unresolved-pair count associated with the joint fingerprint $(J,I)$.  Adding $I$ after $J$ costs $c_IU_J(M)$ in expected evaluations and saves $f[P_J(M)-P_{J,I}(M)]$ in expected terminal calls.  It does not increase the expected total cost if and only if
\begin{equation}
 c_IU_J(M)\le f\bigl[P_J(M)-P_{J,I}(M)\bigr].
 \label{eq:marginal-break-even}
\end{equation}
SI Section~S8 applies both inequalities to a synthetic nested population.  The example is designed to separate two decisions: which of two invariants should be evaluated first, and whether adding a later invariant pays for itself.  There $I_2$ determines $I_1$, meaning that $I_1$ is a function of $I_2$, and $I_3$ determines $I_2$, meaning that $I_2$ is a function of $I_3$.  Thus each later invariant has finer fibers.  The fibers of $J_2$ are consequently the fibers of $I_2$, and the fibers of $J_3$ are the fibers of $I_3$; the same formulas apply when $I_1$ or $I_2$ is omitted.

\section{Discussion}
The main distinction is between random-pair separation and database uniqueness.  For a discrete fingerprint with collision mass $\alpha$, these are governed by $\alpha$ and $M^2\alpha$, respectively.  The results above make this distinction precise for the Alexander polynomial and the determinant, and the collision laws extend it to every discrete invariant.

For the determinant, the source of random-pair separation is local.  In each of Chapman's rooted random-diagram models there is a good-shadow event $\mathcal G_n$, with exponentially small complement, on which the shadow contains linearly many opened-trefoil slots.  After the exterior signs are exposed, the internal signs remain conditionally independent.  A single family of separating spheres gives the same exterior connected-sum factor for every internal redecoration.  Each trefoil slot then contributes one factor of $3$ to the determinant.  Hence, on $\mathcal G_n$,
\[
 v_3(\det K(D_n))=v_3(\det K_n^{\mathrm{ext}})+B_n,
 \qquad
 B_n\mid\mathcal F_n\sim\operatorname{Bin}(\lfloor\delta n\rfloor,1/4).
\]
On every $\mathcal F_n$-fiber contained in $\mathcal G_n$, the largest conditional atom is $O(n^{-1/2})$, so the conditional collision entropy tends to infinity after the exterior crossing data have been fixed.  Adding the probability of $\mathcal G_n^c$ gives the unconditional determinant anti-concentration estimate.

The knot argument supplies an anti-concentration estimate for one discrete law; the passage from a single comparison to a database is purely probabilistic.  For two independent samples from a common discrete law, the pair-collision mass $\alpha$, equivalently $H_2=-\log\alpha$, determines the birthday scale $M_{\mathrm B}=\alpha^{-1/2}$.  For a finite population sampled without replacement, the elementary-symmetric formula gives the no-collision probability for every database size.  This probability depends on the full fiber profile.  At fixed $T$ and $R$, balanced fibers maximize it, while imbalance can move birthday percentiles earlier.  Recent finite-census studies likewise give different rankings by value count, singleton detection, and random-pair separation \cite{dlotkogurnarisazdanovic2025,tubbenhauerzhang2025,kelomakietal2025}.

Collision probability also controls computational fallback.  An invariant-first procedure gives a negative answer when the two invariant values differ and invokes a complete procedure when they agree.  Thus, for two independent diagrams with law $\mu_n$, $\alpha_n(I)$ is the probability that the comparison reaches the complete branch.  In a database of independent diagrams with the same law, the number of complete calls in an all-pairs comparison is the number of colliding pairs.  For two independent diagrams from either Chapman ensemble, determinant comparison gives a polynomial-time negative answer with probability $1-O(n^{-1/2})$.  The difficulty of colliding pairs and the cost of the complete procedure lie outside this conclusion.

For learned knot classifiers, collision fibers provide a controlled test.  A classifier may perform well because it has recovered a familiar invariant, or because it detects information that the invariant misses.  To distinguish these possibilities, it can be tested inside a common invariant fiber.  Refining the fingerprint gives nested sets
\[
 \mathcal B_n(\det)
 \supseteq \mathcal B_n(\Delta)
 \supseteq \mathcal B_n(\Delta,V)
 \supseteq\cdots,
\]
where $V$ denotes the Jones polynomial.  Let $D,D'$ be independent with law $\mu_n$.  Whenever the conditioning event has positive probability, one may report
\[
 \Pi_n(f;J)=
 \Pr\{f(D,D')=\textsc{No}\mid J(D)=J(D'),\ K(D)\not\cong K(D')\}.
\]
This is the conditional negative accuracy for inequivalent pairs on which $J$ agrees.  It should be reported together with accuracy on equivalent pairs, since the constant-\textsc{No} rule has $\Pi_n(f;J)=1$.  Values of $\Pi_n(f;J)$ for different fingerprints refer to different conditioned laws and need not be monotone.  Such pairs may be used as training or test examples for information not contained in $J$.  Recent models distinguish some pairs sharing familiar invariants; $\Pi_n(f;J)$ tests this ability directly \cite{sleimanetal2024,halversonruehle2025}.

Two quantitative problems remain: whether the bound $\alpha_n(\det)=O(n^{-1/2})$ is sharp, and whether several local coordinates can improve the exponent.  The first also marks the difference between random-pair and fixed-target estimates.  For two independent diagrams, determinant equality is bounded by the largest atom and has probability $O(n^{-1/2})$.  A fixed target bounds the hidden binomial variable by a constant, giving an exponentially small lower tail.  The estimate $\alpha_n(\det)=O(n^{-1/2})$ proves that an independent database with $M_n=o(n^{1/4})$ has no determinant collision with high probability.  No matching lower bound is known, so the true order of $\alpha_n(\det)$ and its associated database transition remain open.

An improved exponent would follow from a genuinely $r$-dimensional conditional local limit theorem.  Suppose that, conditional on an exterior sigma-field, an $r$-component valuation vector is the sum of $s_n\asymp n$ independent bounded lattice vectors, its conditional covariance is comparable to $nI_r$, and it satisfies a uniform aperiodicity condition.  Fourier inversion would then give maximal-atom decay of order $n^{-r/2}$.  A separate problem is to control the dependence between the exterior valuation vector and the local sum when computing the collision mass.

Neither rate question has a law-independent answer.  Chapman's ensembles are uniform on rooted diagrams of fixed crossing number; finite censuses are sampled without replacement from a fixed list; and a law on knot types weights the topology differently.  Other random-knot models include lattice and off-lattice self-avoiding polygons, Petaluma diagrams, and random two-bridge knots \cite{sumnerswhittington1988,pippenger1989,evenzoharetal2016,evenzoharetal2018,cohenetal2018,cantarellaetal2026}.  The collision formulas apply once the law is specified, but their values and asymptotic rates require separate analysis.  In another random-knot model, an argument of the present type would require a multiplicative invariant whose valuation contains conditionally independent local summands on an event of high probability.

Finite catalogs present a further decision: which invariant should be evaluated next on entries that the preceding invariants did not separate?  The cascade formulas give the expected number of entries evaluated at each stage, the unresolved-pair count, the preferred order of two invariants, and the break-even cost of another invariant, without assuming independence among the invariant coordinates.  Random-pair separation, uniqueness in a database, and usefulness within a cascade are distinct properties of an invariant.  The same invariant may behave quite differently in the three settings.

\section{Materials and methods}
\label{sec:methods}
\paragraph{Collision laws.}
For fixed $n$, the invariant is a discrete random variable.  Its atom probabilities are $(p_v)_v$, its pair-collision mass $\alpha_n=\sum_vp_v^2$ is the probability that two independent values agree, and its triple mass $\beta_n=\sum_vp_v^3$ is the probability that three agree.  The variable $C_M$ is the database collision count defined in Section~\ref{sec:birthday-law}.  Its expectation and variance follow by classifying two unordered pairs of sample indices according to whether they coincide, share one index, or are disjoint.  Markov's and Chebyshev's inequalities give the distribution-free zero--one law.  The critical-window Poisson statement follows from factorial moments under the additional condition $M_n^3\beta_n\to0$.  This condition controls clusters of collision pairs that share an observation.  The pair-collision condition alone does not give a universal Poisson limit: a distinguished atom whose mass moves with $n$ can preserve such clusters at the critical scale.  The calculations and the counterexample exhibiting this obstruction appear in \SILocation{\SIExactCollisionCalculus{}, \SIUniversalBirthdayLaw{}, \SIPoissonWindow{}, and \SIHeavyAtomCounterexample{}}.

\paragraph{Random diagrams and trefoil slots.}
The proof separates a probabilistic statement about the underlying shadow from a topological calculation in disjoint local balls.  We use the rooted general one-component and rooted reduced one-component diagram classes at fixed crossing number developed by Cantarella, Chapman, and Mastin and analyzed asymptotically by Chapman \cite{cantarellachapmanmastin2016,chapman2017}.  Forgetting the crossing information gives the corresponding rooted shadow.  Every $n$-crossing shadow in either class has exactly $2^n$ decorated lifts, so a uniform diagram is obtained from a uniform rooted shadow by assigning independent fair crossing signs.

Outside an event of exponentially small probability, Chapman's connected-sum pattern theorem gives linearly many copies of a fixed asymmetric decoration of the opened-trefoil shadow.  We discard the witness signs and apply a deterministic rule to the shadow, before any signs are exposed, to choose $s_n$ occurrences at pairwise graph distance at least three.  Each selected occurrence contains three crossing vertices and the five edges between them, and exactly two distinct joining edges connect it to the rest of the shadow.  The separator lemma gives a shadow-measurable family of pairwise disjoint balls that works for every simultaneous internal redecoration.

We expose all exterior signs before sampling the $3s_n$ internal signs.  Replacing the selected local strands by fixed trivial arcs then gives the same exterior knot $K_n^{\mathrm{ext}}$ for every internal sign vector.  The local two-braid calculation shows that each closure is a trefoil for two of the eight decorations and an unknot for the other six.  Conditional on the exposed shadow and exterior signs, the trefoil indicators are therefore independent Bernoulli$(1/4)$ variables.  These statements are proved in \SILocation{\SIShadowSignLaw{}, \SIShadowSelection{}, \SIGenuineInterface{}, \SIDistanceThreeBuffer{}, \SISimultaneousSeparators{}, \SITwoBraidClosure{}, \SIPersistentFactorization{}, and \SIDeterminantAntiConcentration{}}.

\paragraph{Determinant and computation.}
The determinant is normalized as $\det K=|\Delta_K(-1)|$.  On the good-shadow event, multiplicativity under connected sum gives the valuation identity in \eqref{eq:hidden-binomial}.  A Fourier bound for the maximal atom of $\operatorname{Bin}(m,1/4)$ proves the moving-target theorem (\SILocation{\SIBinomialAtom{} and \SIDeterminantAntiConcentration{}}), and a Chernoff-type lower-tail estimate gives the fixed-target exponential bound (\SILocation{\SIFixedDeterminant{} and \SIFixedKnot{}}).  For the algorithmic statements, the determinant is computed from a reduced Goeritz matrix.  Fraction-free elimination and a Hadamard bound give polynomial bit complexity in the diagram size \cite{gordonlitherland1978,bareiss1968}.  A complete knot-equivalence procedure makes $\mathsf A_{\det}$ defined for every pair \cite{haken1961,hemion1992}.  General complexity results for knot and link decision problems are given in \cite{hasslagariaspippenger1999,lackenby2021}.  No running-time bound for the complete branch is used here.

\paragraph{Finite populations and cascades.}
A finite census is modeled as a fixed population of $T$ distinct entries partitioned into invariant fibers.  Databases are uniform $M$-subsets, so all formulas use sampling without replacement.  No-collision probabilities are computed from the elementary symmetric polynomial $e_M(m_1,\ldots,m_R)$ by integer dynamic programming.  At fixed $T$ and $R$, the extremal results follow from pairwise balancing operations on the integer fiber profile.  Cascade calculations use explicit joint prefix fibers and count sampled singletons, unresolved objects, and unresolved pairs.  The examples in Sections~\ref{sec:finite-databases} and \ref{sec:cascades} are finite labeled populations specified in the SI, not empirical knot censuses.  The finite-population proofs are \SILocation{\SIFiniteMoments{}--\SISupportCompression{}}, the cascade formulas are \SILocation{\SICascadeSingletons{}--\SICascadeBreakEven{}}, and the accompanying data and programs are described in \SILocation{\SIReproducibilitySection{}}.

\paragraph{Use of generative AI.}
During the preparation of this manuscript, the authors used ChatGPT and Codex (OpenAI), Claude (Anthropic), and Gemini (Google) for exploratory discussion, language editing, code review, and bibliographic checking. The research questions, mathematical ideas, theorem statements, and proofs were developed and verified by the authors. No output from these systems was treated as a mathematical source. No generative-image tools were used; all figures were produced deterministically from data and \LaTeX/TikZ source. The authors take full responsibility for the article.

\section*{Author contributions}
Conceptualization: P.O.-S., R.S., and E.L.; formal analysis: P.O.-S., E.L., E.J.V.G., C.A.R.G., A.G.-S., R.O.S.-G., and R.L.-V.; writing---original draft: P.O.-S. and E.L.; writing---review and editing: all authors; supervision: E.L.

\section*{Competing interests}
The authors declare no competing interest.

\section*{Data, materials, and software availability}
All data, code, benchmark populations, and figure sources used in this study are included in the accompanying reproducibility archive. The archive will be deposited in a public repository upon publication and is available to editors and reviewers with the submission.

\section*{Acknowledgments}
This work grew in part from the undergraduate honors thesis ``Hubs, Bridges and the Complexity of Unknotting: A structural and computational study of Clasp Diagram simplification'' of Pedro Olivares-S\'anchez in the Applied Mathematics program at the Instituto Tecnol\'ogico Aut\'onomo de M\'exico, and from research undertaken by Ramiro L\'opez-V\'azquez for his PhD thesis in the Department of Mathematics at CINVESTAV. Both theses were directed by Ernesto Lupercio. Part of the research was carried out during Ernesto Lupercio's sabbatical at the International Center for Mathematical Sciences--Sofia, Institute of Mathematics and Informatics, Bulgarian Academy of Sciences. E.L. thanks ICMS--Sofia and IMI--BAS for their hospitality and excellent working conditions. R.S. was partially supported by a Simons Travel Award. E.L. acknowledges financial support from the Consejo Nacional de Humanidades, Ciencias y Tecnolog\'ias of Mexico, Grant CB-2017-2018-A1-S-30345. The ICMS--Sofia portion of this research was supported by the Simons Foundation under Grant SFI-MPS-T-Institutes-00007697 and by the Ministry of Education and Science of the Republic of Bulgaria under contract No.~DO1-239/10.12.2024.

\end{document}